\newif\if@final
\newif\if@finalweb
\newif\if@proofs
\newif\if@proofsline
\newif\if@oldstyle
\newif\if@imfm
\if@proofsline\RequirePackage[displaymath, mathlines]{lineno}\fi
\if@final\RequirePackage{graphicx}\RequirePackage{lastpage}\RequirePackage{ccicons}\fi
\if@proofs\RequirePackage{eso-pic}\fi
\numberwithin{equation}{section}
\def\@urladdress{http://amc.imfm.si}
\def\@urladdress{http://amc-journal.eu}
\def\@emails{}
\def\@authors{}
\def\p@pertitle{}
\def\blfootnote{\xdef\@thefnmark{}\@footnotetext} 
\long\def\symbolfootnote[#1]#2{\begingroup%
\def\thefootnote{\fnsymbol{footnote}}\footnote[#1]{#2}\endgroup} 
\newcommand{\journalnumber}[1]{\def\@journalnumber{#1}}
\newcommand{\journalyear}[1]{\def\@journalyear{#1}}
\newcommand{\journalpages}[2]{%
}
\newcommand{\dates}[3]{%
\def\@receivedate{#1}%
\def\@acceptdate{#2}%
\def\@onlinedate{#3}}
\newenvironment{frontmatter}
{\thispagestyle{amctitle}%
\if@final
  \begin{minipage}{0.3\linewidth}
  \raggedright
  \end{minipage} \hfill
  \begin{minipage}{0.3\linewidth}
  \centering 
  ~
  \end{minipage} \hfill
  \begin{minipage}{0.3\linewidth}
  \centering 
  \if@finalweb\includegraphics[width=\textwidth]{AMCc.pdf}\else\includegraphics[width=\textwidth]{AMC.pdf}\fi
  \end{minipage}
  \begin{center}
  \if@oldstyle
  \else
  {\scriptsize \@issn}\\
  \fi
  ~
  \end{center}
\else
  \begin{center}
  \end{center}
\fi
\vskip 10pt
\if@proofsline\global\linenumbers\fi%
}
{\vskip 20pt%
\blfootnote{\raggedright \ifnum\@authorcount=1\textit{E-mail address:}\else\textit{E-mail addresses:}\fi ~\@emails}%
\if@proofsline\global\linenumbers\fi%
}
\newcommand{\footadvance}[1]{%
\global\advance\@footcount by #1
}
\newcommand{\titledata}[2]{%
\global\def\p@pertitle{#1}%
\vskip 5mm
\begin{center}\LARGE\bfseries%
#1\if!#2!\else
\global\advance\@footcount by 1%
\symbolfootnote[\@footcount]{#2}\fi\end{center}%
\vskip 1mm
}
\newcommand{\authordata}[4]{
\advance\@authorcount by 1%
\xdef\@emails{\@emails\ifnum\@authorcount>1 , \fi {#3} (#1)}%
\xdef\@authors{\@authors\ifnum\@authorcount>1 , \fi {#1}}
\begin{center}%
\if!#4!
{\large #1}\\[1mm]
{\itshape #2}
\else
\global\advance\@footcount by 1%
{\large #1} \symbolfootnote[\@footcount]{#4}\\[1mm]
{\itshape #2}%
\fi
\end{center}}
\newcommand{\authordatanoemail}[3]{
\begin{center}%
\if!#3!
{\large #1}\\[1mm]
{\itshape #2}
\else
\global\advance\@footcount by 1%
{\large #1} \symbolfootnote[\@footcount]{#3}\\[1mm]
{\itshape #2}%
\fi
\end{center}}
\newcommand{\authordataff}[5]{
\advance\@authorcount by 1%
\xdef\@emails{\@emails\ifnum\@authorcount>1 , \fi #3 (#1)}%
\xdef\@authors{\@authors\ifnum\@authorcount>1 , \fi {#1}}
\begin{center}%
\global\advance\@footcount by 1%
{\large #1} \symbolfootnote[\@footcount]{#4}
\global\advance\@footcount by 1%
\symbolfootnote[\@footcount]{#5}
\\[1mm]
{\itshape #2}%
\end{center}}
\newcommand{\authordatawithfootnote}[4]{
\advance\@authorcount by 1%
\xdef\@emails{\@emails\ifnum\@authorcount>1 , \fi #3 (#1)}%
\xdef\@authors{\@authors\ifnum\@authorcount>1 , \fi {#1}}
\begin{center}%
{\large #1} $^{#4}$\\[1mm]
{\itshape #2}%
\end{center}}
\newcommand{\authordatanoaffil}[3]{
\advance\@authorcount by 1%
\xdef\@emails{\@emails\ifnum\@authorcount>1 , \fi #2 (#1)}%
\xdef\@authors{\@authors\ifnum\@authorcount>1 , \fi {#1}}
\if!#3!
{\large #1}
\else
\global\advance\@footcount by 1%
{\large #1} \symbolfootnote[\@footcount]{#3}%
\fi}
\newcommand{\keywords}[1]{\def\@keywords{#1}}
\newcommand{\msc}[1]{\def\@msc{#1}}
\renewenvironment{abstract}
{
\hrule height 0.25pt
\vskip 5pt
\noindent \textbf{Abstract}
\vskip 5pt
}
{
\vskip 5pt
\noindent \textit{\small Keywords:~\@keywords}

\vskip 3pt
\noindent \textit{\small Math.\ Subj.\ Class.: \@msc}
\vskip 5pt
\hrule height 0.25pt
} 
\renewcommand{\section}{%
\@startsection{section}{1}{0pt}{-\baselineskip}{0.5\baselineskip}{\large\bfseries}%
}
\renewcommand{\subsection}{%
\@startsection{subsection}{2}{0pt}{-\baselineskip}{0.5\baselineskip}{\bfseries}%
}
\theoremstyle{plain}
\newtheorem{thm}{Theorem}[section]
\newtheorem{lem}[thm]{Lemma}
\theoremstyle{definition}
\newtheorem{rem}[thm]{Remark}
\newtheorem{remark}[thm]{Remark}
  \newcommand{\ps@amc}{%
    \renewcommand\@evenfoot{}
    \renewcommand\@oddfoot{}
  }
  \newcommand{\ps@amctitle}{%
    \renewcommand\@oddhead{}
    \let\@evenhead\@oddhead
    \let\@oddfoot\@evenfoot
  }
  \newcommand{\ps@amc}{%
    \renewcommand\@evenfoot{}
    \renewcommand\@oddfoot{}
  }
  \newcommand{\ps@amctitle}{%
    \renewcommand\@oddhead{}
    \let\@evenhead\@oddhead
    \let\@oddfoot\@evenfoot
  }
\def\enumerate{%
  \ifnum \@enumdepth >\thr@@\@toodeep\else
    \advance\@enumdepth\@ne
    \edef\@enumctr{enum\romannumeral\the\@enumdepth}%
      \expandafter
      \list
        \csname label\@enumctr\endcsname
        {\usecounter\@enumctr\def\makelabel##1{\hss\llap{##1}}%
          \itemsep=0pt}%
  \fi}
\def\itemize{%
  \ifnum \@itemdepth >\thr@@\@toodeep\else
    \advance\@itemdepth\@ne
    \edef\@itemitem{labelitem\romannumeral\the\@itemdepth}%
    \expandafter
    \list
      \csname\@itemitem\endcsname
      {\def\makelabel##1{\hss\llap{##1}}%
        \itemsep=0pt}%
  \fi}
\renewenvironment{thebibliography}[1]
     {\section*{\refname}%
      \list{\@biblabel{\@arabic\c@enumiv}}%
           {\settowidth\labelwidth{\@biblabel{#1}}%
            \leftmargin\labelwidth
            \advance\leftmargin\labelsep
            \@openbib@code
            \usecounter{enumiv}%
            \let\p@enumiv\@empty
            \renewcommand\theenumiv{\@arabic\c@enumiv}
            \itemsep=0.0pt}%
      \sloppy
      \small
      \clubpenalty4000
      \@clubpenalty \clubpenalty
      \widowpenalty4000%
      \sfcode`\.\@m}
     {\def\@noitemerr
       {\@latex@warning{Empty `thebibliography' environment}}%
      \endlist}
\let\@openbib@code\@empty
\newcommand{\BackgroundPic}{
\put(0,-50){
\parbox[b][\paperheight]{\paperwidth}{%
\vfill
\centering
\includegraphics[width=\paperwidth,height=\paperheight,keepaspectratio]{proofs.pdf}%
\vfill
}}}
\newcommand{\amcend}
{
\if@final
\newwrite\contentsfile
\immediate\openout\contentsfile=contentsfile.txt
\makeatletter
\immediate\write\contentsfile{\@backslashchar contentsline\@charlb subsection\@charrb
\@charlb\@backslashchar textbf\@charlb\p@pertitle\@charrb\@backslashchar\@backslashchar}
\makeatother
\closeout\contentsfile
}
\if@proofs\AddToShipoutPicture{\BackgroundPic}\fi
\theoremstyle{plain}
\numberwithin{equation}{section}
\newcommand\cL{\mathcal L}
\newcommand\cC{\mathcal C}
\newcommand\cD{\mathcal D}
\newcommand\cH{\mathcal H}
\newcommand\cM{\mathcal M}
\newcommand\cX{\mathcal X}
\newcommand\cT{\mathcal T}
\newcommand\cS{\mathcal S}
\newcommand\cW{\mathcal W}
\newcommand\PG{{\rm PG}}
\newcommand\GF{{\rm GF}}
\theoremstyle{definition}
\begin{document}




\begin{frontmatter}   

\titledata{On Hermitian varieties in $\PG(6,q^2)$}           
{}                 

\authordata{Angela Aguglia}            
{Dipartimento di Meccanica, Matematica e Management, Politecnico di Bari, Via Orabona 4, I-70125 Bari, Italy}    
{angela.aguglia@poliba.it}                     
{The author was supported by  the Italian National Group for Algebraic and Geometric Structures and their Applications (GNSAGA - INdAM). }

\authordata{Luca Giuzzi}            
{DICATAM, University of Brescia, Via Branze 53,
  I-25123 Brescia, Italy}
{luca.giuzzi@unibs.it}
{The author was supported by  the Italian National Group for Algebraic and Geometric Structures and their Applications (GNSAGA - INdAM). }

\authordata{Masaaki Homma}            
{Department of Mathematics and Physics, Kanagawa University, Hiratsuka 259-1293, Japan}
{homma@kanagawa-u.ac.jp}
{}

\keywords{unital, Hermitian variety, algebraic hypersurface.}
\msc{51E21; 51E15; 51E20}

\begin{abstract}
 In this paper we  characterize the  non-singular Hermitian variety $\cH(6,q^2)$ of $\PG(6,q^2)$, $q\neq 2$ among the irreducible hypersurfaces of degree $q+1$ in $\PG(6,q^2)$ not containing solids
  by the number of its  points and the existence of
  a solid $S$ meeting it in $q^4+q^2+1$ points.
\end{abstract}

\end{frontmatter}

\section{Introduction}
The set of all absolute points of a non-degenerate unitary polarity in
$\PG(r,q^2)$ determines the Hermitian variety $\cH(r,q^2)$.
This is a non-singular
algebraic hypersurface of degree $q+1$ in $\PG(r,q^2)$ with a number of remarkable properties, both from
the geometrical and the combinatorial point of view; see~\cite{BC,SE2}.
In particular, $\cH(r,q^2)$ is a
$2$-character set with respect to the hyperplanes of $\PG(r,q^2)$ and $3$-character blocking set with respect to the lines of $\PG(r,q^2)$
for $r>2$.
An interesting and widely investigated problem is to provide combinatorial descriptions of $\cH(r,q^2)$ among all hypersurfaces of the same degree.

First, we observe that
a condition on the number of points and the intersection numbers
with hyperplanes is not in general sufficient to characterize Hermitian varieties; see~\cite{ACK12},\cite{A13}. On the other hand, it is
enough  to consider in addition the intersection numbers with codimension $2$ subspaces in order to get a complete description; see~\cite{DS1}.

In general, a hypersurface $\cH$ of $\PG(r,q)$ is viewed
as a hypersurface over the algebraic closure of $\GF(q)$ and a point of $\PG(r, q^i)$ in $\cH$ is called a
$\GF(q^i)$-point. A $\GF(q)$-point of $\cH$ is also said  to be a rational point of $\cH$. Throughout this paper, the
number of $\GF(q^i)$-points of $\cH$ will be denoted by $N_{q^i}(\cH)$. For simplicity, we shall also use the convention
$|\cH|=N_q(\cH)$.

In the present paper, we shall investigate a combinatorial characterization
of the Hermitian hypersurface $\cH(6,q^2)$ in $\PG(6,q^2)$ among all hypersurfaces of the same degree having also the same number of $\GF(q^2)$-rational points.

More in detail, in \cite{HK,HKf} it has been proved that if $\cX$ is a hypersurface of degree $q+1$ in $\PG(r,q^2)$, $r\geq3$ odd, with    $|\cX|=|\cH(r,q^2)|=(q^{r+1}+(-1)^r)(q^{r}-(-1)^r)/(q^2-1)$
$\GF(q^2)$-rational points,
not containing linear subspaces of dimension greater than
$\frac{r-1}{2}$, then $\cX$ is a non-singular Hermitian variety of $\PG(r,q^2)$.
This result generalizes the characterization of
\cite{HSTV} for the Hermitian curve of $\PG(2,q^2)$, $q\neq 2$.

The case where $r>4$ is even is, in general, currently open.
A starting point for a characterization in arbitrary even dimension can be found in~\cite{AAFP} where the case of a hypersurface $\cX$ of degree $q+1$
in $\PG(4,q^2)$, $q>3$ is considered. There,
it is shown that when $\cX$  has the same number of  rational points as $\cH(4,q^2)$, does not contain any subspaces of dimension greater than $1$ and meets at least
one plane $\pi$ in $q^2+1$  $\GF(q^2)$-rational points, then $\cX$ is a Hermitian variety.

In this article we deal with hypersurfaces of degree $q+1$ in $\PG(6,q^2)$
and we prove  that a characterization similar to that of~\cite{AAFP}
holds also in dimension $6$. We conjecture that this can be extended
to arbitrary even dimension.

\begin{thm}\label{main}
Let $S$ be a hypersurface of $\PG(6,q^2)$, $q>2$, defined over $\GF(q^2)$, not containing solids. If the degree of $S$ is $q+1$ and the number of its rational points is $q^{11}+q^9+q^7+q^4+q^2+1$, then every solid of $\PG(6,q^2)$ meets $S$ in at least $q^4+q^2+1$  rational points. If there is at least a solid $\Sigma_3$ such that $|\Sigma_3\cap S|=q^4+q^2+1$, then $S$ is a non-singular Hermitian variety of $\PG(6,q^2)$.
\end{thm}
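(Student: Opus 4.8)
The plan is to reduce the problem to the five-dimensional characterization of \cite{HK,HKf} applied to the hyperplane sections of $S$, and then to extract from the exceptional solid $\Sigma_3$ a generator plane of $S$ together with the family of tangent hyperplanes along it, from which the unitary polarity can be reconstructed. First I would record the standing reductions: since $\deg S=q+1$ and $S$ contains no solid, no subspace of dimension at least $3$ can lie on $S$, so no $5$-space is a component and every hyperplane $\Pi\cong\PG(5,q^2)$ meets $S$ in a genuine degree-$(q+1)$ hypersurface $S\cap\Pi$ of $\Pi$ containing no solid. By the bound behind \cite{HK,HKf} this yields $|S\cap\Pi|\le|\cH(5,q^2)|=q^9+q^7+q^5+q^4+q^2+1=:a$, with equality exactly when $S\cap\Pi$ is a non-singular $\cH(5,q^2)$.

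I would then run the standard equations for the numbers $t_i$ of hyperplanes meeting $S$ in exactly $i$ points,
\[
\sum_i t_i=\theta_6,\qquad \sum_i i\,t_i=|S|\,\theta_5,\qquad \sum_i\binom{i}{2}t_i=\binom{|S|}{2}\theta_4,
\]
where $\theta_6$, $\theta_5$, $\theta_4$ count, respectively, the hyperplanes of $\PG(6,q^2)$, those through a fixed point, and those through a fixed line. Feeding in $|S|=q^{11}+q^9+q^7+q^4+q^2+1$ together with the ceiling $i\le a$, I expect a variance-type estimate to collapse the spectrum onto the two admissible values $a$ and $b:=a-q^5=q^9+q^7+q^4+q^2+1$; thus $S$ is a two-character set whose hyperplane sections are either non-singular varieties $\cH(5,q^2)$ (by \cite{HK,HKf}, since $a$ is attained) or Hermitian cones with a point-vertex over $\cH(4,q^2)$.

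With this dichotomy, Part~1 is immediate: for any solid $\Sigma$ pick a hyperplane $\Pi\supseteq\Sigma$; whether $S\cap\Pi$ is $\cH(5,q^2)$ or a cone, a direct computation of the solid-sections of these two configurations gives $|S\cap\Sigma|\ge q^4+q^2+1$ (the minimum, $q^5+q^2+1$, for the non-singular case being even larger). Moreover these computations show that the value $q^4+q^2+1$ is attained only inside a cone section, and only when the solid cuts it in a generator plane. Hence, under the hypothesis $|S\cap\Sigma_3|=q^4+q^2+1$, every one of the $q^4+q^2+1$ hyperplanes through $\Sigma_3$ must be of cone type (a non-singular section would force at least $q^5+q^2+1$ points), and $S\cap\Sigma_3$ is exactly a plane $\pi$ lying on $S$, i.e.\ a generator. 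Since a generator of a cone passes through its vertex, the vertices of these $q^4+q^2+1$ cones all lie on $\pi$; matching them against the $q^4+q^2+1$ points of $\pi$ identifies the hyperplanes through $\Sigma_3$ with the tangent hyperplanes of $S$ at the points of $\pi$.

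Finally I would globalize this local polar data. Each non-singular section $\cH(5,q^2)$ carries a unitary polarity of its hyperplane, all sharing the generator $\pi$ and the tangencies along it forced above; patching them produces a single non-degenerate Hermitian form on $\PG(6,q^2)$ whose variety is $S$. Equivalently, the generator together with the two-character spectrum pins down the intersection numbers of $S$ with every codimension-$2$ subspace, and \cite{DS1} applies. \textbf{The principal obstacles} are two. The first is the collapse in the counting step: Serre's bound for degree-$(q+1)$ hypersurfaces of $\PG(5,q^2)$ strictly exceeds $a$, so confining the section sizes to $\{a,b\}$ demands both the sharper estimates of \cite{HK,HKf} (securing the ceiling $a$) and a matching lower bound $|S\cap\Pi|\ge b$, inserted together into the quadratic standard equation. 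The second is the rigidity at the end—showing that the per-hyperplane polarities are mutually compatible and that the assembled form is genuinely non-degenerate, so that $S$ is the non-singular $\cH(6,q^2)$ and not one of the quasi-Hermitian two-character sets of \cite{ACK12,A13}; it is precisely here that the generator produced by the minimal solid $\Sigma_3$ must be used in an essential way.
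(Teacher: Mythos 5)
Your endgame (a two-value hyperplane spectrum plus codimension-2 intersection numbers, then \cite{DS1}) coincides with the paper's, but the route you propose to the hyperplane dichotomy has two gaps that cannot be repaired as stated. First, the ceiling $|S\cap\Pi|\le|\cH(5,q^2)|$ is not available from \cite{HK,HKf}: those results characterize hypersurfaces whose number of rational points \emph{equals} $|\cH(r,q^2)|$ (and which contain no subspaces of dimension greater than $(r-1)/2$); they are not upper bounds for arbitrary, possibly singular, degree-$(q+1)$ sections, and as you yourself note the elementary bounds for such hypersurfaces strictly exceed $a$. Second, even granting the ceiling, the ``variance-type'' collapse fails: in the identity $\sum_i (i-b)(i-a)x_i = \text{(computable)}$, any intersection size strictly between $b$ and $a$ contributes \emph{negatively}, so the three standard equations alone cannot confine the spectrum to $\{a,b\}$. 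You would also need the lower bound $|S\cap\Pi|\ge b$ for every hyperplane, and the only lower bound one can actually prove at this stage (every hyperplane section is a blocking set with respect to lines, hence has at least $\theta_{q^2}(4)=q^8+q^6+q^4+q^2+1$ points by Bose--Burton \cite{BB}) is far below $b=q^9+q^7+q^4+q^2+1$. Since your Part 1 and the identification of $S\cap\Sigma_3$ as a plane are both derived from this unproven dichotomy, they inherit the gap.

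The paper resolves exactly this difficulty by reversing the logic: it works bottom-up from $\Sigma_3$ rather than top-down from hyperplanes. It first proves, via plane-curve estimates (Lemmas~\ref{Homma} and~\ref{block}), that $S$ is a blocking set with respect to lines; Part 1 then follows at once from Bose--Burton applied inside each solid, and the same theorem shows $S\cap\Sigma_3$ is a plane $\Pi$ (Lemma~\ref{plane}) with no reference to hyperplane sections. It then climbs dimensions: $4$-spaces through $\Sigma_3$ meet $S$ in cones with vertex a line over a Hermitian curve (Lemma~\ref{cone0}, using maximal surfaces and the $\PG(4,q^2)$ cone characterization of Lemma~\ref{ch}), and $5$-spaces through $\Sigma_3$ meet $S$ in cones $P\,\cH(4,q^2)$ (Lemma~\ref{cone1}, using the characterization of \cite{AAFP}). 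Only after this geometric bootstrap does counting enter: the structure restricts every hyperplane section to one of exactly \emph{three} explicit sizes, and the standard equations \eqref{tg01} become a solvable linear system that kills the middle value; sections of size $|\cH(5,q^2)|$ are then genuine Hermitian varieties by the odd-dimensional characterization. This also eliminates the need for your hand-waved ``patching of polarities'': once every hyperplane section is an actual $\cH(5,q^2)$ or cone $P\,\cH(4,q^2)$, the codimension-2 intersection numbers come for free and \cite{DS1} finishes. The missing idea in your proposal is precisely this bootstrap --- blocking set, Bose--Burton, and the cone characterizations in dimensions 4 and 5 --- which restricts the spectrum \emph{before} any counting is attempted.
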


Furthermore, we also extend the result obtained in \cite{AAFP} to the case $q=3$.

\section{Preliminaries and notation}

In this section we collect some useful information and results that will be crucial to our proof.

A Hermitian variety in $\PG(r,q^2)$ is the algebraic variety of
$\PG(r,q^2)$ whose points $\langle v\rangle$
satisfy the equation $\eta(v,v)=0$ where $\eta$ is a sesquilinear
form $\GF(q^2)^{r+1}\times\GF(q^2)^{r+1}\to\GF(q^2)$.
The \emph{radical} of the form $\eta$ is the vector subspace of
$\GF(q^2)^{r+1}$ given by
\[ \mathrm{Rad}(\eta):=\{ w\in\GF(q^2)^{r+1}\colon
  \forall v\in\GF(q^2)^{r+1}, \eta(v,w)=0 \}. \]
The form $\eta$ is non-degenerate if $\mathrm{Rad}(\eta)=\{\mathbf{0}\}$.
If the form $\eta$ is non-degenerate, then the corresponding
Hermitian variety is denoted by $\cH(r,q^2)$ and it is non-singular,
of degree $q+1$ and contains
$$(q^{r+1}+(-1)^r)(q^{r}-(-1)^r)/(q^2-1)$$
$\GF(q^2)$-rational points.
When $\eta$ is
degenerate we shall call \emph{vertex} $R_t$ of the degenerate
Hermitian variety associated to $\eta$ the projective
subspace $R_t:=\PG(\mathrm{Rad}(\eta)):=\{ \langle w\rangle\colon
w\in\mathrm{Rad}(\eta)\}$ of $\PG(r,q^2)$.
A degenerate Hermitian variety can always be described as
a cone of vertex $R_t$ and  basis a non-degenerate
Hermitian variety $\cH(r-t,q^2)$ disjoint from $R_t$ where $t=\dim(\mathrm{Rad}(\eta))$
is the vector dimension of the radical of $\eta$. In this case
we shall write the corresponding variety as $R_t\cH(r-t,q^2)$.
Indeed,
\[ R_t\cH(r-t,q^2):=\{ X\in\langle P,Q\rangle\colon
  P\in R_t, Q\in\cH(r-t,q^2) \}. \]

Any line of $\PG(r,q^2)$ meets a Hermitian variety (either degenerate or not)
in either $1, q+1$ or $q^2+1$ points (the latter value only for $r>2$).
The maximal dimension of projective subspaces contained in
the non-degenerate Hermitian variety $\cH(r,q^2)$ is $(r-2)/2$, if $r$ is even, or $(r-1)/2$, if $r$ is odd.
These subspaces of
maximal dimension are called {\em generators} of $\cH(r,q^2)$ and the generators of $\cH(r,q^2)$ through a point $P$ of $\cH(r, q^2)$ span a hyperplane
$P^{\perp}$ of $\PG(r,q^2)$, the {\em tangent hyperplane} at $P$.

It is well known that
this hyperplane meets $\cH(r,q^2)$ in a degenerate Hermitian variety
$P\cH(r-2,q^2)$, that is in a Hermitian
cone having as vertex the point $P$ and as base a non-singular Hermitian variety of $\Theta\cong\PG(r-2,q^2)$ contained in $P^{\perp}$ with
$P\not\in\Theta$.

Every hyperplane of $\PG(r, q^2)$, which is not tangent, meets $\cH(r, q^2)$ in a non-singular Hermitian variety $\cH(r-1, q^2)$, and is called a {\em secant hyperplane} of $\cH(r, q^2)$.
In particular, a tangent hyperplane contains
$$
1+q^2(q^{r-1}+(-1)^r)(q^{r-2}-(-1)^r)/(q^2-1)
$$
$\GF(q^2)$-rational points of $\cH(r,q^2)$, whereas a secant hyperplane contains
$$
(q^{r}+(-1)^{r-1})(q^{r-1}-(-1)^{r-1})/(q^2-1)
$$
$\GF(q^2)$-rational points of $\cH(r,q^2)$.

We now recall several results which shall be used in the course of
this paper.

\begin{lem}[\cite{SE}]\label{Se} Let $d$ be an integer with $1\leq d\leq q+1$ and let $\cC$ be a curve of degree $d$ in $\PG(2,q)$ defined over $\GF(q)$, which may have $\GF(q)$-linear components. Then the number  of its rational points is at most $dq+1$ and $N_{q}(\cC)=dq+1$ if and only if $\cC$ is a pencil of $d$ lines of $\PG(2,q)$.
\end{lem}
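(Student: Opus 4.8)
The plan is to establish the upper bound $N_q(\cC)\le dq+1$ by induction on the degree $d$, peeling off linear components one at a time, and then analyze the equality case separately. The base case $d=1$ is a single line, which has exactly $q+1$ rational points, matching $1\cdot q+1$. For the inductive step, I would distinguish whether or not $\cC$ has a $\GF(q)$-rational linear component.

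\textbf{Inductive step.} Suppose first that $\cC$ contains a line $\ell$ defined over $\GF(q)$, so $\cC=\ell\cup\cC'$ where $\cC'$ has degree $d-1$. Every rational point of $\cC$ lies on $\ell$ or on $\cC'$, and $\ell$ contributes at most $q+1$ points, so by induction
\[
N_q(\cC)\le (q+1)+N_q(\cC')\le (q+1)+\bigl((d-1)q+1\bigr)=dq+2.
\]
This overcounts by one, so I would sharpen the bound by noting that $\ell$ and $\cC'$ must share at least one rational point (a line in $\PG(2,q)$ meets any plane curve it is not a component of, and here $\ell\subseteq\cC$), or more carefully by counting the points of $\ell$ not on $\cC'$; the intersection point is counted twice and recovers the correct bound $N_q(\cC)\le dq+1$. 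The second, harder case is when $\cC$ has no rational linear component. Here the idea is a line-counting argument: fix a rational point $P$ and consider the $q+1$ lines through $P$. Each line either is a component of $\cC$ (excluded by assumption, or handled as above) or meets $\cC$ in at most $d$ rational points by B\'ezout, counted with the point $P$ itself; summing over the pencil and correcting for the multiple counting of $P$ yields the bound.

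\textbf{Equality case.} For the characterization of equality I would argue that $N_q(\cC)=dq+1$ forces, at each stage of the induction, that \emph{every} line through a suitable point meets $\cC$ in the maximum number of rational points and that these are all concentrated appropriately. Concretely, equality propagates down the induction only if at each removal step both $\ell$ and $\cC'$ are themselves extremal and meet in a single common rational point; iterating, this means $\cC$ decomposes as a union of $d$ lines all defined over $\GF(q)$ and all passing through one common point, i.e.\ a pencil of $d$ lines, which indeed has $d q+1=d(q+1)-\binom{d}{2}\cdot 0\ldots$ rational points (the $d$ lines share a single point, giving $d\cdot q+1$). Conversely a pencil of $d$ lines obviously attains the bound.

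\textbf{Main obstacle.} I expect the delicate part to be the equality analysis rather than the bound itself: one must rule out extremal non-pencil configurations, such as $d$ lines in \emph{general} position (which have more shared points but still only $dq+1-\binom{d}{2}+\cdots$ rational points, fewer than a pencil once $d\ge 2$) or curves with non-rational components contributing sporadic rational points. The cleanest route is probably to cite or reproduce the Homma--Kim type bound for plane curves over finite fields, from which both the inequality and the pencil characterization follow directly; since the statement is attributed to \cite{SE}, I would lean on that reference for the sharp equality case and only sketch the counting that makes the pencil the unique extremal configuration.
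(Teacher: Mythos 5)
The paper offers no proof of this lemma to compare against: it is quoted directly from Segre's \emph{Le geometrie di Galois} \cite{SE}, so your attempt can only be measured against the classical argument. Your overall architecture (peel off $\GF(q)$-rational linear components by induction, then a pencil-of-lines count through a rational point in the line-free case) is the standard shape and does yield the bound $N_{q}(\cC)\le dq+1$. But one step is wrongly justified and one is genuinely missing. The sharpening of $dq+2$ down to $dq+1$ rests on your claim that $\ell$ and $\cC'$ ``must share at least one rational point'' because a line meets any curve it is not a component of. B\'ezout guarantees intersection only over the algebraic closure; a rational line and a rational curve can be disjoint in $\PG(2,q)$ (e.g.\ the line $Z=0$ and the conic $X^2+Y^2=Z^2$ over $\GF(3)$ meet in two conjugate points). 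The step is repairable, but only by strengthening the induction hypothesis to carry the equality characterization along: if $N_{q}(\cC')=(d-1)q+1$, then $\cC'$ is a pencil of $\GF(q)$-rational lines, each of which \emph{does} meet $\ell$ in a rational point; if instead $N_{q}(\cC')\le (d-1)q$, the bound is immediate. You gesture at this but never state the strengthened hypothesis, and as written your inductive step proves only $N_{q}(\cC)\le dq+2$.

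More seriously, your equality analysis tracks only the branch where a rational line is peeled off. In the line-free branch your own count gives $N_{q}(\cC)\le 1+(d-1)(q+1)=dq+d-q$, which is strictly below $dq+1$ for $d\le q$ but equals $dq+1$ exactly at $d=q+1$. So to finish you must exclude a curve of degree $q+1$ with no $\GF(q)$-linear components whose rational point set is all of $\PG(2,q)$; this is the one delicate configuration, it needs a real idea (for instance, that every degree-$(q+1)$ form vanishing on all of $\PG(2,q)$ is a linear combination of $X^qY-XY^q$, $Y^qZ-YZ^q$, $Z^qX-ZX^q$, and every such form factors into $q+1$ lines through a common rational point), and you explicitly defer it to the reference --- which is to punt on the only hard part of the equality claim. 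Note also that the paper's Lemma~\ref{Szi} (Homma--Kim) would dispose of the line-free case at once, since $(d-1)q+1<dq+1$ (and the exceptional $\GF(4)$ quartic has $14<17$ points): equality then forces $\cC$ to be a union of $d$ rational lines, and concurrency follows from an easy count, since $k$ distinct lines carry at most $kq+1$ rational points with equality if and only if they are concurrent. Leaning on that lemma, rather than on \cite{SE}, would have closed your gap with the tools already quoted in this paper.
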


\begin{lem}[\cite{HK3}]\label{Szi}
Let $d$ be an integer with $2\leq d  \leq q+2$, and $\cC$
a curve of degree $d$ in $\PG(2,q)$ defined over $\GF(q)$ without any $\GF(q)$-linear components. Then $N_{q}(\cC)\leq (d-1)q+1$, except for a class of plane curves of degree $4$ over $\GF(4)$ having $14$ rational points.
\end{lem}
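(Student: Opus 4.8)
\emph{Plan of proof.} The plan is to follow the strategy of~\cite{AAFP} for $\PG(4,q^2)$, replacing the plane sections used there by solid sections and running the dimension induction one extra step. First record the two numerical facts that organize everything: $q^{11}+q^9+q^7+q^4+q^2+1=|\cH(6,q^2)|$, while $q^4+q^2+1=(q^6-1)/(q^2-1)$ is exactly the number of points of a plane $\PG(2,q^2)$, and for the model variety $\cH(6,q^2)$ this is the \emph{smallest} cardinality of a solid section, attained precisely when the solid meets $\cH(6,q^2)$ in a generator plane (the rank-one case of the restricted Hermitian form). Thus the theorem asserts that $S$ reproduces both the minimal solid-section number of $\cH(6,q^2)$ and the rigidity that forces equality.

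The first ingredient is a bound on plane sections. If a plane $\pi$ is not contained in $S$, then $S\cap\pi$ is a curve of degree $q+1$ in $\pi\cong\PG(2,q^2)$, so Lemma~\ref{Se} applied over $\GF(q^2)$ gives $|S\cap\pi|\le (q+1)q^2+1=q^3+q^2+1$, with equality only when $S\cap\pi$ is a pencil of $q+1$ lines; and when $S\cap\pi$ has no $\GF(q^2)$-linear component, Lemma~\ref{Szi} improves this to $|S\cap\pi|\le q^3+1$. Here the hypothesis $q>2$ enters decisively: it excludes the exceptional quartic over $\GF(4)$ in Lemma~\ref{Szi}, so the dichotomy ``either a line lies in the section, or the section has at most $q^3+1$ points'' holds without exception. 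I would then push these estimates up one dimension: a solid section $S\cap\Sigma_3$ is a degree $q+1$ surface, and summing the plane bounds over the $q^6+q^4+q^2+1$ planes of $\Sigma_3$ (each point of the section lying on $q^4+q^2+1$ of them) bounds $|S\cap\Sigma_3|$ from above and shows that a rich solid section must contain many lines or a plane.

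The heart of the first assertion (every solid meets $S$ in at least $q^4+q^2+1$ points) is a global count tied to the exact value $|S|=|\cH(6,q^2)|$. I would pass to hyperplane sections: a hyperplane $H\cong\PG(5,q^2)$ meets $S$ in a hypersurface of degree $q+1$ containing no solid, and since $5$ is \emph{odd} the characterization of~\cite{HK,HKf} applies — once the number of points of $S\cap H$ is pinned to $|\cH(5,q^2)|$, that result forces $S\cap H=\cH(5,q^2)$. The task is therefore to show, using only $|S|$, the section bounds above, and double counts of the incident pairs (point, hyperplane) and (point, solid), that $S$ is a two-character set with respect to hyperplanes whose two characters are exactly the tangent and secant values of $\cH(6,q^2)$; the extremality built into Lemmas~\ref{Se} and~\ref{Szi} is what quantizes the admissible section sizes and removes the intermediate values. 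Granting this, an arbitrary solid $\Sigma_3$ lies in hyperplanes cutting copies of $\cH(5,q^2)$, so $S\cap\Sigma_3$ is the intersection of $\Sigma_3$ with such a Hermitian variety, hence a (possibly degenerate) Hermitian variety of $\PG(3,q^2)$, whose least number of points is $q^4+q^2+1$.

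For the second assertion, a solid $\Sigma_3$ with $|\Sigma_3\cap S|=q^4+q^2+1$ must realize the rank-one case, i.e.\ meet $S$ in a repeated plane $\pi\subset S$, which is then a generator. I would use $\pi$ as a seed: intersecting with the hyperplanes through $\Sigma_3$, already known to cut Hermitian varieties, identifies a tangent hyperplane at each point of $S$ and yields a map $P\mapsto P^{\perp}$; showing that this map is an involutory non-degenerate unitary polarity whose absolute points are exactly $S$ identifies $S$ with $\cH(6,q^2)$. The main obstacle is the quantization step of the first assertion: in even dimension there is no direct inductive Hermitian characterization, so the two-character property must be extracted from the single cardinality $|S|$ together with the curve bounds, and controlling how the lines and planes contained in $S$ distribute among the sections is where the real difficulty lies — the reduction to the odd-dimensional case~\cite{HK,HKf} is precisely what makes it tractable.
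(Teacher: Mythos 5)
Your text does not prove the statement it was supposed to prove. The statement is Lemma~\ref{Szi}: a plane curve $\cC$ of degree $d$, $2\leq d\leq q+2$, defined over $\GF(q)$ and having no $\GF(q)$-linear components, satisfies $N_q(\cC)\leq (d-1)q+1$, with a single exceptional class of quartics over $\GF(4)$ with $14$ points. What you wrote is instead a proof plan for Theorem~\ref{main}, the characterization of $\cH(6,q^2)$ — and, tellingly, it \emph{invokes} Lemma~\ref{Szi} as a known ingredient (``Lemma~\ref{Szi} improves this to $|S\cap\pi|\le q^3+1$''), which would be circular if offered as a proof of that lemma. Nowhere in your write-up is there any bound on the number of rational points of a plane curve derived from its degree and component structure, which is the entire content of the statement.

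For the record, the paper itself gives no proof of Lemma~\ref{Szi}; it is quoted from Homma--Kim \cite{HK3}. A genuine argument must case-split on the component structure of $\cC$: if $\cC$ is absolutely irreducible, one combines the St{\"o}hr--Voloch bound (via the Frobenius classical/non-classical dichotomy) with Hasse--Weil for small degrees, and this is exactly where the exceptional $\GF(4)$ quartic with $14$ points emerges; if $\cC$ is irreducible over $\GF(q)$ but not absolutely irreducible, every rational point lies in the common intersection of the Frobenius-conjugate components, so B\'ezout gives $N_q(\cC)\leq (d/t)^2$, well below $(d-1)q+1$; if $\cC$ is reducible over $\GF(q)$, one sums the bounds over components (none of which is a line, by hypothesis). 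The paper's Lemma~\ref{Homma} displays this machinery — the polar-type curve $\cD$, Euler's relation forcing intersection multiplicity at least $2$ at rational points, B\'ezout — but for a different, sharper threshold; none of it appears in your proposal. Separately, even read as a sketch of Theorem~\ref{main}, your route (pinning hyperplane sections to $|\cH(5,q^2)|$ and applying \cite{HK,HKf} at the outset) differs from the paper's actual proof, which builds up through the blocking-set Lemma~\ref{block}, the cone Lemmas~\ref{cone0} and~\ref{cone1}, and concludes via the intersection-number characterization of De Winter--Schillewaert (Lemma~\ref{ds1}); but that comparison is moot, since the assigned statement is the curve bound, which remains unproved.
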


\begin{lem}[\cite{HK0}]\label{hk0}
Let $\cS$ be a surface of degree $d$ in $\PG(3,q)$ over $\GF(q)$. Then
\[N_{q}(\cS)\leq dq^2+q+1\]
\end{lem}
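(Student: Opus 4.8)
The plan is to bound $N_q(\cS)$ by cutting $\cS$ with a pencil of planes and reducing to the planar estimates of Lemma~\ref{Se} (Serre) and Lemma~\ref{Szi}. First I would dispose of the trivial ranges: the case $d=1$ (a plane) gives $N_q(\cS)=q^2+q+1$ with equality, and if $d\ge q+1$ then $dq^2+q+1\ge q^3+q^2+q+1=|\PG(3,q)|\ge N_q(\cS)$; so I may assume $2\le d\le q$, in which case any plane not a component of $\cS$ cuts it in a curve of degree $d\le q+1$, so Lemma~\ref{Se} and Lemma~\ref{Szi} are available. The key tool is a counting identity. Fix a line $\ell$ and let $\pi_0,\dots,\pi_q$ be the $q+1$ planes through $\ell$. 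Since two distinct planes of the pencil meet exactly in $\ell$, every rational point off $\ell$ lies in a unique $\pi_i$ while every rational point of $\ell$ lies in all of them, whence
\[ N_q(\cS)=\sum_{i=0}^{q}N_q(\cS\cap\pi_i)-q\,|\cS\cap\ell|. \]
If $\ell\not\subseteq\cS$, then no $\pi_i$ is a component of $\cS$, so each $C_i:=\cS\cap\pi_i$ is a genuine curve of degree $d$ and Lemma~\ref{Se} gives $N_q(C_i)\le dq+1$.

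The cleanest closing is to produce a \emph{rational $d$-secant}, i.e. a line $\ell$ with $\ell\not\subseteq\cS$ and $|\cS\cap\ell|=d$ (recall $d\le q$, so such a line cannot lie on $\cS$). For such $\ell$ the identity collapses to
\[ N_q(\cS)\le (q+1)(dq+1)-qd=dq^2+q+1, \]
which is exactly the claimed bound; moreover the only way to reach equality is for each section to be a pencil of $d$ lines, identifying the extremal surfaces as the pencils of $d$ planes through a common line. Thus the whole difficulty is concentrated in guaranteeing a $d$-secant, or handling its absence.

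When $\cS$ contains \emph{no} line, every plane section $C_i$ carries no linear component, so Lemma~\ref{Szi} sharpens the estimate to $N_q(C_i)\le (d-1)q+1$. Choosing $\ell$ through a rational point of $\cS$ (so $|\cS\cap\ell|\ge 1$; if $\cS$ has no rational point there is nothing to prove) the identity yields $N_q(\cS)\le (q+1)((d-1)q+1)-q=(d-1)q^2+(d-1)q+1\le dq^2+q+1$ for $d\le q$. The sole exceptional family of quartics over $\GF(4)$ allowed in Lemma~\ref{Szi} occurs only for $d=q=4$, where each of the $q+1=5$ sections has at most $14$ points; the crude estimate $N_q(\cS)\le 5\cdot 14-4=66<69=dq^2+q+1$ then absorbs it.

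The main obstacle is the remaining case: $\cS$ contains a line but admits no rational $d$-secant. Here the plane sections may acquire linear components, so Lemma~\ref{Szi} no longer applies and the naive use of $N_q(C_i)\le dq+1$ over-counts (it already fails for $\cS$ a pencil of planes, where most sections through a line of $\cS$ reduce to that line). I would attack this in two layers. For a section $C_i$ that is not a pencil of $d$ lines, Serre's equality case forces $N_q(C_i)\le dq$, and more usefully: if some non-component section is a pencil of $d$ lines, a line of that plane avoiding the common point is a $d$-secant of $\cS$, returning us to the Serre case; so one must show that either a section yields such a $d$-secant, or all sections are strictly sub-extremal. For plane components one peels them off, inducting on their number: writing $\cS=\pi\cup\cS'$ with $\deg\cS'=d-1$ and choosing the pencil centre $\ell\subseteq\pi$ with $\ell\not\subseteq\cS'$, the sections become $\ell\cup(\pi_i\cap\cS')$ and the bound for $\cS$ should descend to that for $\cS'$. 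I expect the delicate point throughout to be the inclusion–exclusion bookkeeping of the rational points shared between the components, since a low-degree section need not carry the $q+1$ common points that the clean accounting requires; controlling this overlap is where the real work lies.
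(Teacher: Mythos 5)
First, a point of comparison: the paper offers no proof of this lemma at all --- it is imported verbatim from Homma--Kim \cite{HK0} --- so your attempt can only be judged on its own merits. Three of your four branches are correct and complete: the trivial reductions to $2\le d\le q$; the incidence identity $N_q(\cS)=\sum_{i=0}^q N_q(\cS\cap\pi_i)-q\,|\cS\cap\ell|$ together with Lemma~\ref{Se}, which does yield $dq^2+q+1$ whenever a rational $d$-secant exists; and the line-free case via Lemma~\ref{Szi}, where your arithmetic, including the absorption of the exceptional degree-$4$ curves over $\GF(4)$ at $d=q=4$, checks out.

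The genuine gap is the branch you yourself flag as open: $\cS$ contains a line but admits no rational $d$-secant. The dichotomy you propose there does not close numerically: if every section through a $1$-secant $\ell$ is sub-extremal, i.e.\ $N_q(C_i)\le dq$, the identity only gives $N_q(\cS)\le (q+1)dq-q=dq^2+dq-q$, which exceeds $dq^2+q+1$ as soon as $d\ge 3$; and your component-peeling only treats plane components, so it says nothing about, e.g., a line on a smooth cubic surface. The case can in fact be closed inside your own framework, and the ``overlap bookkeeping'' you worry about amounts to a single shared point: run the pencil through a line $m\subseteq\cS$ itself. Each section splits as $C_i=m\cup C_i'$ with $\deg C_i'=d-1$. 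By the equality case of Lemma~\ref{Se}, either $N_q(C_i')\le (d-1)q$, whence $N_q(C_i)\le (q+1)+(d-1)q=dq+1$; or $C_i'$ is a pencil of $d-1$ rational lines, and then either $m$ is one of them (so $N_q(C_i)\le (d-1)q+1$) or each of those lines meets $m$ in a rational point counted on both sides, so again $N_q(C_i)\le (q+1)+(d-1)q+1-1=dq+1$. Since the $q+1$ rational points of $m$ are common to all $q+1$ sections,
\begin{equation*}
N_q(\cS)\;\le\;(q+1)(dq+1)-q(q+1)\;=\;(q+1)\bigl((d-1)q+1\bigr)\;=\;(d-1)q^2+dq+1\;\le\;dq^2+q+1,
\end{equation*}
the last inequality holding exactly for $d\le q+1$. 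With this substituted for your unfinished third case, the proof is complete; as written, however, the attempt is not.
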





\begin{lem}[\cite{HSTV}]\label{uni}
Suppose $q\neq 2$.  Let $\cC$ be a plane curve over $\GF(q^2)$ of degree $q+1$ without $\GF(q^2)$-linear components. If $\cC$ has $q^3+1$ rational points, then $\cC$ is a Hermitian curve.
\end{lem}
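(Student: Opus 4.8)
The plan is to push the hypotheses through three reductions: from a degree-$(q+1)$ curve with $q^3+1$ points to an absolutely irreducible one, then to a smooth \emph{maximal} curve over $\GF(q^2)$, and finally to the Hermitian curve itself. The starting observation is that $q^3+1$ is exactly the extremal value: applying Lemma~\ref{Szi} with ground field $\GF(q^2)$ to a curve of degree $d=q+1$ without $\GF(q^2)$-linear components gives $N_{q^2}(\cC)\le (d-1)q^2+1=q^3+1$, so $\cC$ attains the bound and the whole problem is to analyse the equality case. (Here $q>2$ guarantees that we are away from the exceptional quartic over $\GF(4)$ in Lemma~\ref{Szi}.)

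First I would establish $\GF(q^2)$-irreducibility. Writing $\cC=\cC_1\cup\dots\cup\cC_s$ as a union of $\GF(q^2)$-irreducible components of degrees $d_j\ge 2$ (no component is a line) with $\sum_j d_j=q+1$, Lemma~\ref{Szi} applied to each $\cC_j$ yields
\[
q^3+1=N_{q^2}(\cC)\le\sum_{j=1}^{s} N_{q^2}(\cC_j)\le\sum_{j=1}^s\bigl((d_j-1)q^2+1\bigr)=q^3+q^2-s(q^2-1),
\]
which forces $s=1$. To upgrade this to \emph{absolute} irreducibility, suppose $\cC$ were $\GF(q^2)$-irreducible but split over the algebraic closure into $e\ge 2$ Frobenius-conjugate components of degree $(q+1)/e$. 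A rational point is fixed by the $q^2$-Frobenius, which permutes the conjugates cyclically, so it lies on two distinct conjugates; by B\'ezout there are at most $\binom{e}{2}\bigl((q+1)/e\bigr)^2<q^3+1$ such points, a contradiction. Hence $\cC$ is absolutely irreducible.

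Next I would prove $\cC$ is smooth and maximal. Let $\tilde\cC$ be the normalization, of geometric genus $g\le p_a(\cC)=\binom{q}{2}$. Counting rational points through $\tilde\cC\to\cC$ gives $N_{q^2}(\cC)\le N_{q^2}(\tilde\cC)+s_0$, where $s_0$ is the number of rational singular points having no rational branch, and $s_0\le p_a-g=\binom{q}{2}-g$. Combining the Hasse--Weil bound $N_{q^2}(\tilde\cC)\le q^2+1+2gq$ with the hypothesis $N_{q^2}(\cC)=q^3+1=q^2+1+2q\binom{q}{2}$ gives, after the common factor $2q-1$ cancels, $\binom{q}{2}\le g$. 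Thus $g=\binom{q}{2}$, so $\cC$ is smooth and $\cC=\tilde\cC$ is a maximal curve over $\GF(q^2)$. Finally, a smooth plane curve of degree $q+1$ that is maximal over $\GF(q^2)$ carries the rigid Frobenius structure of maximal curves: its zeta numerator is $(1+qT)^{2g}$, equivalently the fundamental equivalence $qP+\Phi(P)\sim(q+1)P_0$ holds for the $q^2$-Frobenius $\Phi$. This forces $\cC$ to be $\GF(q^2)$-Frobenius nonclassical, so that the tangent line at each rational point $P$ meets $\cC$ only at $P$ and the tangent correspondence $P\mapsto T_P\cC$ is a non-degenerate unitary polarity; hence $\cC$ is projectively equivalent to $\cH(2,q^2)$.

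The main obstacle is exactly this last identification: turning ``smooth, maximal of genus $\binom{q}{2}$'' into ``projectively the Hermitian curve of degree $q+1$''. Abstractly the curve is forced to be the Hermitian one by the uniqueness of the maximal curve of the extremal genus $\binom{q}{2}$, but one must still check that the given degree-$(q+1)$ plane model coincides with the Hermitian embedding, i.e.\ that the hyperplane class $\mathcal O_{\cC}(1)$ matches --- this is where the unitary polarity coming from the tangent correspondence does the work. The only other point needing care is the singular Weil estimate $N_{q^2}(\cC)\le N_{q^2}(\tilde\cC)+s_0$ with $s_0\le p_a-g$; everything else is mechanical once the bound of Lemma~\ref{Szi} is recognised as sharp.
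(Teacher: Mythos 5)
You should first note that the paper does not prove Lemma~\ref{uni} at all: it is quoted verbatim from \cite{HSTV}, so the comparison can only be with the original argument, whose key mechanism (St\"ohr--Voloch forcing Frobenius nonclassicality) the paper in fact reproduces inside its own Lemma~\ref{Homma}, case (c). Your first three reductions are sound and correctly executed: the component count via Lemma~\ref{Szi} forces $\GF(q^2)$-irreducibility; the conjugate-component B\'ezout argument (the same one used in case (b) of Lemma~\ref{Homma}) gives absolute irreducibility; and the identity $q^3+1=q^2+1+2q\binom{q}{2}$ combined with $N_{q^2}(\cC)\le N_{q^2}(\tilde\cC)+s_0$ and $s_0\le p_a-g$ does pin down $g=\binom{q}{2}$, hence smoothness and maximality.

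The genuine gap is the final identification, and it is worse than the ``point needing care'' you flag. First, the implication ``maximal $\Rightarrow$ the \emph{given} plane model is Frobenius nonclassical'' does not follow from the fundamental equivalence $qP+\Phi(P)\sim(q+1)P_0$: that equivalence singles out one rational divisor class of degree $q+1$, but nothing in your argument identifies it with the hyperplane class $\mathcal{O}_{\cC}(1)$ --- on a maximal curve the rational part of $\mathrm{Pic}^0$ is only annihilated by $q+1$, so the two classes may differ --- and the unitary-polarity argument you propose to verify the matching already presupposes the nonclassicality you are deriving, so the reasoning is circular. Second, the case $q=2$ is the litmus test that an idea is missing: your proof uses $q\neq 2$ only to avoid the exceptional curves in Lemma~\ref{Szi}, which have degree $4$ while $q+1=3$, so that use is vacuous and your argument, if valid, would prove the $q=2$ case --- which is false. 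Indeed, embedding the (abstractly unique) maximal cubic over $\GF(4)$ by a rational degree-$3$ class whose sum is a nonzero point of the group yields a smooth plane cubic with $9$ rational points that is not projectively equivalent to the Hermitian cubic; this shows concretely that maximality plus R\"uck--Stichtenoth uniqueness cannot control the embedding. In \cite{HSTV} the hypothesis $q>2$ enters exactly here, via St\"ohr--Voloch applied to the plane model itself (as in case (c) of Lemma~\ref{Homma}): a Frobenius classical curve of degree $q+1$ over $\GF(q^2)$ has at most $\frac{1}{2}q(q+1)^2$ rational points, which is $<q^3+1$ precisely when $q\ge 3$ (equality at $q=2$), so the given embedding is Frobenius nonclassical. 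Finally, even granting nonclassicality, the step ``the tangent correspondence is a nondegenerate unitary polarity, hence $\cC$ is $\cH(2,q^2)$'' is the substantive core of the original proof (a Hefez--Voloch-type classification of degree-$(q+1)$ Frobenius nonclassical plane curves), not a one-sentence consequence; as written, your sketch leaves exactly that theorem to be proved.
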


\begin{lem}[\cite{DS1}]\label{ds1}
A subset of points of $\PG(r,q^2)$ having the same intersection numbers with respect to hyperplanes and spaces of codimension $2$ as non-singular Hermitian varieties, is a non-singular Hermitian variety of $\PG(r,q^2)$.
\end{lem}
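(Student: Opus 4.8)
The plan is to reconstruct a unitary polarity of $\PG(r,q^2)$ from the purely combinatorial intersection data and then identify the given point set $\cK$ with its set of absolute points, proceeding by induction on $r$. Note that the induction cannot start below $r=3$: for $r=2$ the codimension-$2$ subspaces are points, so that hypothesis is vacuous and the statement would reduce to the false assertion that every unital is classical. Hence the codimension-$2$ condition does genuine work, and the conclusion holds, only for $r\ge 3$.

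First I would extract the global numerics. Let $\tau$ and $\sigma$ be the two values a hyperplane takes on $\cH(r,q^2)$ (tangent and secant); by hypothesis every hyperplane meets $\cK$ in $\tau$ or $\sigma$ points, so $\cK$ is a two-character set with respect to hyperplanes. Double counting the incidences of points, and of pairs of points, with hyperplanes forces $|\cK|=|\cH(r,q^2)|$ and pins the number of tangent and of secant hyperplanes to the values attained by a genuine Hermitian variety. I would then label each hyperplane tangent or secant according to its intersection number.

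Next I would set up the induction through secant hyperplanes. If $\Pi\cong\PG(r-1,q^2)$ is secant, then its hyperplanes are exactly the codimension-$2$ subspaces of $\PG(r,q^2)$ contained in $\Pi$, so the codimension-$2$ hypothesis already makes $\cK\cap\Pi$ a two-character set in $\Pi$ with the parameters of $\cH(r-1,q^2)$. To apply the inductive hypothesis I must also control the intersection numbers inside $\Pi$ one codimension deeper; the bookkeeping device is to carry, as a strengthened inductive statement, the intersection numbers with subspaces of every codimension, checking both that they propagate to secant sections and that the base case $r=3$ (where the codimension-$2$ data are precisely the line numbers $1,q+1,q^2+1$) can be settled directly. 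Granting this, every secant hyperplane cuts $\cK$ in a non-singular $\cH(r-1,q^2)$, and dually every tangent hyperplane meets $\cK$ in a Hermitian cone $P\,\cH(r-2,q^2)$ with a well-defined vertex $P\in\cK$.

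The heart of the argument, and the step I expect to be the main obstacle, is to globalize this local picture into a single unitary polarity. The vertex assignment sends each tangent hyperplane to its vertex in $\cK$; I would first prove it a bijection onto $\cK$, so that every $P\in\cK$ acquires a candidate tangent hyperplane $P^{\perp}$, and then extend $P\mapsto P^{\perp}$ to all points of $\PG(r,q^2)$ using the distribution of tangent and secant hyperplanes through an arbitrary point. The real difficulty is to show that this assignment is the reflexive, semilinear correlation arising from a Hermitian form: the hypotheses provide only cardinalities, so the consistency and linearity of $\perp$ must be extracted from incidence counts among the Hermitian sections glued along secant hyperplanes. Once such a polarity $\rho$ is available, its absolute points form a non-singular variety of either orthogonal or unitary type; the line pattern $1,q+1,q^2+1$ forced by the Hermitian sections excludes the quadric, so $\rho$ is unitary, and a concluding count matches $\cK$ with the absolute points of $\rho$, giving $\cK=\cH(r,q^2)$.
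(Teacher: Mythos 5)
This lemma is quoted by the paper from \cite{DS1} and no proof of it appears in the text, so there is no internal argument to compare against; your proposal has to be measured against the published proof of De Winter and Schillewaert. Your outline points in the right general direction --- induction on dimension through hyperplane sections, anchored in low-dimensional characterizations --- and your observation that the statement must fail for $r=2$ (non-classical unitals) is a correct sanity check. But the proposal contains two genuine gaps, both of which you flag yourself without resolving, so it is a road map rather than a proof.

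The first gap is that the induction, as you set it up, does not close. For a secant hyperplane $\Pi\cong\PG(r-1,q^2)$, the hypothesis does give the intersection numbers of $\cK\cap\Pi$ with the hyperplanes of $\Pi$ (these are the codimension-$2$ subspaces of the ambient space), but applying the lemma inductively inside $\Pi$ requires the codimension-$2$ numbers \emph{of} $\Pi$, i.e.\ codimension-$3$ data in $\PG(r,q^2)$, which is not hypothesized. Your proposed fix --- ``carry, as a strengthened inductive statement, the intersection numbers with subspaces of every codimension'' --- strengthens the \emph{hypothesis}, not merely the conclusion, so the base of the induction changes nature: one would now have to \emph{derive} the deeper intersection numbers of $\cK$ from the codimension-$1$ and codimension-$2$ data alone, by counting tangent and secant hyperplanes through a fixed subspace and controlling the possible distributions. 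That derivation is precisely the technical heart of \cite{DS1}, and it is entirely absent from your sketch; asserting it as ``bookkeeping'' conceals the main difficulty.

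The second gap is the step you yourself call ``the main obstacle'': globalizing the local Hermitian sections into a unitary polarity. You give no mechanism for proving that the vertex map $P\mapsto P^{\perp}$ is a well-defined, reflexive, semilinear correlation starting from cardinality data only, and this is not a routine verification --- consistency of $\perp$ across different secant sections is exactly where purely combinatorial hypotheses are weakest. The published approach does not need this reconstruction: once the sections are controlled, one shows that every line meets the set in $1$, $q+1$ or $q^2+1$ points and invokes the line-intersection characterizations of Hermitian varieties (the same engine as the paper's Lemma~\ref{ch}, from Hirschfeld and Thas), never exhibiting a sesquilinear form. So the one step you identify as hardest is one the actual proof is designed to avoid, and in your version it remains unproved.
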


From \cite[Th 23.5.1,Th 23.5.3]{HT} we have the following.
\begin{lem}\label{ch}
  If $\cW$ is a set of $q^7+q^4+q^2+1$ points of $\PG(4,q^2)$, $q > 2$, such that every line of $\PG(4,q^2)$ meets $\cW$ in $1, q+1$ or $q^2+1$ points, then $\cW$ is a
  Hermitian cone with vertex a line and base a unital.
\end{lem}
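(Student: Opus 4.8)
The plan is to recover the cone structure purely combinatorially from the line--type $(1,q+1,q^2+1)$ and the cardinality, organising everything around the notion of a \emph{singular point}: a point $P\in\cW$ through which no $(q+1)$--secant of $\cW$ passes. First I would record the local counting equations. There are $q^6+q^4+q^2+1$ lines through a point of $\PG(4,q^2)$; writing $a(P),b(P),c(P)$ for the numbers of tangent, $(q+1)$--secant and full ($(q^2+1)$--secant) lines through a point $P\in\cW$, the two identities
\[ a(P)+b(P)+c(P)=q^6+q^4+q^2+1,\qquad q\,b(P)+q^2\,c(P)=q^7+q^4+q^2 \]
follow at once. Hence $q\mid b(P)$ and $c(P)=q^5+q^2+1-b(P)/q$, so $c(P)\le q^5+q^2+1$ with equality precisely at the singular points.

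The pivotal observation is the \emph{cone lemma}: if $P$ is singular, then the $c(P)=q^5+q^2+1$ full lines through $P$ already account for $1+(q^5+q^2+1)q^2=q^7+q^4+q^2+1=|\cW|$ points, so every point of $\cW$ lies on a full line through $P$ and $\cW$ is a cone with apex $P$. I would then run an induction on the dimension through projection. Projecting $\cW$ from such a $P$ onto a solid $\Sigma\cong\PG(3,q^2)$ yields a set $\cW'$ of $q^5+q^2+1$ points; lines of $\Sigma$ correspond to planes through $P$, and since a plane through $P$ meets the cone in a pencil of $k$ full lines, a line of $\Sigma$ meets $\cW'$ in exactly $k\in\{1,q+1,q^2+1\}$ points, so $\cW'$ again has line--type $(1,q+1,q^2+1)$. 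By the $\PG(3,q^2)$ case of the statement, $\cW'$ is a Hermitian cone with vertex a point $P'$ and base a unital $\cU$ in a plane $\pi_0\subseteq\Sigma$. Pulling back, $\cW=\bigcup_{Q\in\cU}\langle P,P',Q\rangle=\bigcup_{Q\in\cU}\langle \ell,Q\rangle$, where $\ell=\langle P,P'\rangle$ is a line meeting $\Sigma$ only in $P'\notin\pi_0$, hence complementary to $\pi_0$. This realises $\cW$ as the cone $R_1\cU$ with vertex the line $\ell$.

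It remains to check that the base $\cU$ really is a unital. From the cone description $|\cW|=(q^2+1)+|\cU|\,q^4$, which forces $|\cU|=q^3+1$, so I only need that $\cU$ is of type $(1,q+1)$. This I would get by a direct incidence count ruling out a full line in a $(q^3+1)$--set of type $(1,q+1,q^2+1)$ in $\PG(2,q^2)$: if a line $m\subseteq\cU$ existed, then for $X\in\cU\setminus m$ each of the $q^2+1$ lines through $X$ would meet both $X$ and $m$, hence carry at least $q$ further points of $\cU$, giving $q^3=|\cU|-1\ge(q^2+1)q=q^3+q$, a contradiction. Thus $\cU$ has type $(1,q+1)$ and size $q^3+1$, i.e.\ is a unital, completing the identification $\cW=R_1\cU$.

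The main obstacle is the \emph{existence} of a singular point, on which the whole reduction rests; the two counting equations alone leave $b(P)$ undetermined and do not produce one. Here I would argue globally through solid sections: a solid $\Sigma$ meets $\cW$ in a set of type $(1,q+1,q^2+1)$ whose admissible cardinalities are restricted by the $\PG(3,q^2)$ classification, and a weighted count of the solids of each type, combined with the divisibility $q\mid b(P)$ and the non--negativity of $a(P)=q^6-q^5+q^4-(q-1)b(P)/q$, must be pushed to exhibit a solid whose section is a point--cone over a unital --- whose vertex is then a singular point of the full $\cW$. This step, where the hypothesis $q>2$ genuinely enters (through the lower--dimensional characterizations), is the delicate heart of the argument; once a single singular point is secured, the projection--and--induction mechanism above finishes the proof.
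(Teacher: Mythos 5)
First, a point of comparison: the paper does not prove this lemma at all --- it quotes it directly from Hirschfeld and Thas \cite[Theorems 23.5.1 and 23.5.3]{HT}, where it is part of the general classification of sets of class $[1,q+1,q^2+1]$ with respect to lines. So your attempt must be measured against that classification. The parts of your argument that are carried out are correct: the local counts $a(P)+b(P)+c(P)=q^6+q^4+q^2+1$ and $q\,b(P)+q^2\,c(P)=q^7+q^4+q^2$, the divisibility $q\mid b(P)$, the ``cone lemma'' (a singular point $P$ has $c(P)=q^5+q^2+1$ full lines, which already cover all of $\cW$), the projection from $P$ preserving the line-type $(1,q+1,q^2+1)$, the arithmetic $|\cW|=(q^2+1)+q^4|\cU|$ forcing $|\cU|=q^3+1$, and the incidence count ruling out a full line in the base, all check out. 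But the proof has a genuine gap, which you yourself flag and never close: the \emph{existence} of a singular point. Your two counting equations leave $b(P)$ undetermined, and the sketched plan (``a weighted count of the solids of each type \dots must be pushed'') is a statement of intent, not an argument; nothing in the proposal excludes a set of the right size and line-type in which every point lies on some $(q+1)$-secant. Proving that every such set is a cone is exactly the hard content of the cited Hirschfeld--Thas theorems, where it is extracted from the full theory of sets of class $[1,q+1,q^2+1]$ --- analysis of hyperplane sections (which are again of this class, with restricted cardinalities), character-type counting arguments, and only then the identification of the vertex --- not from the local line counts alone.

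There is a second, structural gap: you invoke ``the $\PG(3,q^2)$ case of the statement,'' but the statement is about $\PG(4,q^2)$ with one specific cardinality, so no induction is actually set up. You would need to formulate and prove the three-dimensional analogue (a $(q^5+q^2+1)$-set of class $[1,q+1,q^2+1]$ in $\PG(3,q^2)$ is a point-cone over a unital), and that proof runs into precisely the same missing step --- existence of a singular point --- one dimension down, plus a planar base case. In short: your reduction ``cone over a suitable base $\Rightarrow$ Hermitian cone with vertex a line and base a unital'' is sound and cleanly executed, and it correctly identifies where $q>2$ should enter; but the substance of the lemma, namely that a set of this size and line-type \emph{must} be a cone, is missing at every level of your intended induction, so the proposal does not constitute a proof.
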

Finally, we recall that a {\em blocking set with respect to lines} of $\PG(r, q)$ is a point set which blocks all the lines, i.e., intersects each line of $\PG(r, q)$ in at least one point.

\section{Proof of Theorem~\ref{main}}
We first provide an estimate on the number of points of
a curve of degree $q+1$ in $\PG(2,q^2)$, where $q$ is any prime power.
\begin{lem}\label{Homma}
  Let $\cC$ be a plane curve over $\GF(q^2)$, without $\GF(q^2)$-lines as components and of degree $q+1$. If the number of $\GF(q^2)$-rational points
  of $\cC$ is $N<q^3+1$, then
\begin{equation} \label{num}
N\leq \left\{\begin{array}{ll}
    q^3-(q^2-2)  & {\mbox{\it if}  \ q>3}  \\
    24   & {\mbox{\it if}  \ q=3} \\
     8    & {\mbox{\it if}  \ q=2} .
   \end{array}  \right.
\end{equation}

\end{lem}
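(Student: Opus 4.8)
The plan is to regard $\cC$ as a curve of degree $q+1$ over the ground field $\GF(q^2)$ and to bring the Sziklai-type estimate of Lemma~\ref{Szi} to bear, with the role of ``$q$'' played by $q^2$. Since $\cC$ has no $\GF(q^2)$-line as a component, B\'ezout guarantees that every $\GF(q^2)$-rational line meets $\cC$ in at most $q+1$ rational points; thus Lemma~\ref{Szi}, applied with field $\GF(q^2)$ and $d=q+1$, gives $N\le (d-1)q^2+1=q^3+1$. The exceptional family in Lemma~\ref{Szi} consists of quartics over $\GF(4)$, and in our situation degree $q+1=4$ forces $q=3$, whose ground field is $\GF(9)\neq\GF(4)$; so the exception never arises and $N\le q^3+1$ holds unconditionally. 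Moreover, by Lemma~\ref{uni} the value $N=q^3+1$ is attained only by a Hermitian curve, which is excluded by the hypothesis $N<q^3+1$, so in fact $N\le q^3$.

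The real content is the much larger gap asserted for $q>3$: a non-Hermitian curve of this degree must lose at least $q^2-1$ rational points relative to the maximum. To obtain this I would run an incidence count over the $q^4+q^2+1$ rational lines. Writing $t_i$ for the number of rational lines meeting $\cC$ in exactly $i$ rational points ($0\le i\le q+1$), the three moment equations
\[
\sum_i t_i = q^4+q^2+1,\qquad \sum_i i\,t_i = N(q^2+1),\qquad \sum_i i(i-1)\,t_i = N(N-1),
\]
together with the nonnegative combination $\sum_{i\ge 1}(i-1)(q+1-i)\,t_i\ge 0$, confine $N$ near the maximum only when almost every line is tangent ($i=1$) or full ($i=q+1$) and the number $t_0$ of external lines is small. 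The aim is then to show that $N>q^3-q^2+2$ forces the intersection spectrum to collapse to $\{1,q+1\}$ with $t_0=0$, i.e.\ that $\cC$ is a unital with exactly $q^3+1$ points, whence Lemma~\ref{uni} makes it Hermitian, contradicting $N<q^3+1$. Controlling the external lines and the lines of intermediate intersection size is the main obstacle: the three moment equations alone yield only the weaker bound $N\le q^3+q+1$, so one must feed in genuine geometry — a projection/pencil argument from a fixed rational point of $\cC$ describing how the full secant lines distribute — to rule out the configurations that would otherwise force a line component.

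The two small cases must be handled separately, since there the generic second-largest estimate is beaten by sporadic curves. For $q=3$ (quartics over $\GF(9)$) and $q=2$ (cubics over $\GF(4)$) the moment inequalities degenerate, and I would instead appeal to the explicit classification of maximal and near-maximal plane curves of these low degrees over the respective fields, reading off the second-largest point numbers $24$ and $8$ recorded in~\eqref{num}. I expect the hardest single step to be the rigidity argument for $q>3$ that upgrades ``$N$ close to $q^3+1$'' to ``intersection spectrum $\{1,q+1\}$ and no external line,'' since this is precisely where the slack left by the naive moment inequalities has to be absorbed by geometric information about the full secants.
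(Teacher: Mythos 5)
Your preliminary reduction ($N\le q^3+1$ from Lemma~\ref{Szi}, then $N\le q^3$ via Lemma~\ref{uni}) is correct, but the heart of the lemma --- the drop to $q^3-(q^2-2)$ for $q>3$ --- is left unproved, and the route you propose cannot close it. Any argument built only on the line-intersection spectrum and moment identities is blocked by a concrete obstruction: delete one point from a Hermitian curve. The resulting set has $q^3$ points and meets every line in $0$, $1$, $q$ or $q+1$ points, so it satisfies all three of your moment equations and every nonnegativity constraint on the $t_i$; hence no combinatorial manipulation of $t_0,\dots,t_{q+1}$, nor a pencil/projection count of $(q+1)$-secants, can push $N$ below $q^3$, let alone down to $q^3-q^2+2$. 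The missing input is algebraic, and the paper supplies it by a trichotomy your sketch never considers. (a) If $\cC$ has two or more $\GF(q^2)$-components, applying Lemma~\ref{Szi} to each gives $N\le[(q+1)-2]q^2+2=q^3-(q^2-2)$; note the extremal value in the statement comes precisely from this \emph{reducible} case, which your plan (aimed at showing near-extremal curves are unitals) ignores entirely. (b) If $\cC$ is irreducible over $\GF(q^2)$ but not absolutely irreducible, its rational points lie in the common intersection of the Galois-conjugate components, so B\'ezout gives $N\le\left(\frac{q+1}{2}\right)^2$. (c) If $\cC$ is absolutely irreducible, the hypothesis $N<q^3+1$ forces $\cC$ to be Frobenius classical, by the Borges--Homma bound~\cite{BH} (a Frobenius non-classical curve of degree $q+1$ has at least $q^3+1$ rational points); the St\"ohr--Voloch argument~\cite{SV} then intersects $\cC$ with the curve $\frac{\partial F}{\partial x}x^{q^2}+\frac{\partial F}{\partial y}y^{q^2}+\frac{\partial F}{\partial z}z^{q^2}=0$, on which every rational point has intersection multiplicity at least $2$, giving $2N\le(q+1)(q^2+q)$, i.e.\ $N\le\frac{1}{2}q(q+1)^2$, which is at most $q^3-(q^2-2)$ for all $q\ge4$.

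Your handling of the small cases is also off target for $q=3$: the value $24$ is not read off from any classification of near-maximal quartics over $\GF(9)$ (you cite none, and none is needed); it is simply the St\"ohr--Voloch bound $\frac{1}{2}\cdot3\cdot4^2=24$ from case (c), which for $q=3$ exceeds the reducible bound $20$ and so becomes the stated maximum. For $q=2$ your instinct essentially matches the paper: a cubic over $\GF(4)$ without linear components is automatically absolutely irreducible (cases (a) and (b) cannot occur in degree $3$), a singular such cubic is rational with $N\in\{4,5,6\}$, and a nonsingular one is elliptic, where Hasse--Weil together with Schoof's realizability result~\cite{SH} gives $N\le9$, hence $N\le8$ under $N<9$. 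In summary, apart from the $q=2$ discussion, the proposal is missing the decisive ideas --- decomposition into components, Frobenius classicality, and the St\"ohr--Voloch count --- and its central mechanism is provably insufficient.
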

\begin{proof}
We distinguish the following three cases:
\begin{enumerate}[(a)]
\item\label{(a)} $\cC$ has two or more $\GF(q^2)$-components;
\item\label{(b)} $\cC$ is irreducible over $\GF(q^2)$, but not absolutely irreducible;
\item\label{(c)} $\cC$ is absolutely irreducible.
\end{enumerate}
Suppose first $q\neq2$.\par\noindent
{\bf{Case (\ref{(a)})}}
Suppose $\cC=\cC_1\cup\cC_2$. Let $d_i$ be the degree of $\cC_i$,  for each $i=1,2$. Hence $d_1+d_2=q+1$. By Lemma~\ref{Szi},
\[N \leq N_{q^2}(\cC_1)+N_{q^2}(\cC_2)\leq [(q+1)-2]q^2+2=q^3-(q^2-2)\]
{\bf{Case (\ref{(b)})}} Let $\cC'$ be an irreducible component of $\cC$ over the algebraic closure of $\GF(q^2)$. Let $\GF(q^{2t})$
be the minimum defining field of $\cC'$ and $\sigma$ be the Frobenius morphism of $\GF(q^{2t})$ over $\GF(q^2)$. Then
\[\cC=\cC' \cup \cC'^{\sigma} \cup \cC'^{\sigma^2}\cup \ldots \cup \cC'^{\sigma^{t-1}},    \]
and the degree of $\cC'$, say $e$, satisfies $q+1=te$ with $e>1$. Hence any $\GF(q^2)$-rational point of $\cC$ is contained in $\cap_{i=0}^{t-1} \cC'^{\sigma^i}$. In particular, $N\leq e^2\leq (\frac{q+1}{2})^2$ by Bezout's Theorem and $(\frac{q+1}{2})^2 <q^3-(q^2-2)$.\\
\noindent
{\bf{Case (\ref{(c)})}} Let $\cC$ be an absolutely irreducible curve over $\GF(q^2)$ of degree $q+1$. Either $\cC$ has a singular point or not.

In general, an absolutely irreducible plane curve $\cM$  over $\GF(q^2)$ is $q^2$-Frobenius non-classical if for a general point $P(x_0,x_1,x_2)$ of $\cM$ the point  $P^{q^2}=$ $P^{q^2}(x_0^{q^2} ,x_1^{q^2},x_2^{q^2})$ is on the tangent line to $\cM$ at the point $P$. Otherwise, the curve  $\cM$ is said to be Frobenius classical.
A lower bound of the number of $\GF(q^2)$-points for $q^2$-Frobenius non-classical curves is given by \cite[Corollary 1.4]{BH}:
for a $q^2$-Frobenius non-classical curve $\cC'$ of degree $d$,
we have $N_{q^2}(\cC') \geq d(q^2 -d +2)$.
In particular, if $d=q+1$, the lower bound is just $q^3 +1$.

Going back to our original curve $\cC$,
we know $\cC$ is Frobenius classical because $N < q^3 +1$.
Let $F(x,y,z)=0$ be an equation of $\cC$ over $\GF(q^2)$.
We consider the curve $\cD$ defined by
$
\frac{\partial F}{\partial x}x^{q^2}
+
\frac{\partial F}{\partial y}y^{q^2}
+
\frac{\partial F}{\partial z}z^{q^2} =0.
$
Then $\cC$ is not a component of $\cD$ because $\cC$ is Frobenius classical.
Furthermore, any $\GF(q^2)$-point $P$ lies on $\cC \cap \cD$
and the intersection multiplicity of $\cC$ and $\cD$ at $P$ is at least $2$
by Euler's theorem for homogeneous polynomials.
Hence by B\'{e}zout's theorem,
$
2N \leq (q+1)(q^2 +q).
$
Hence
\[
N \leq \frac{1}{2}q(q+1)^2.
\]
This argument is due to St\"{o}hr and Voloch \cite[Theorem 1.1]{SV}.
This St{\"o}hr and  Voloch's bound is lower than the estimate for $N$ in case~(\ref{(a)}) for $q>4$ and it is the same for $q=4$.
When
$q=3$ the bound in case (\ref{(a)}) is smaller than the
St{\"o}hr and  Voloch's bound.\\
\noindent
Finally, we consider the case $q=2$. Under this assumption,
$\cC$ is a cubic curve and neither case (\ref{(a)}) nor case (\ref{(b)}) might occur. For a degree $3$ curve over $\GF(q^2)$ the St{\"o}hr and  Voloch's bound is loose, thus we need to change our argument.
If $\cC$ has a singular point, then $\cC$ is a rational curve with a unique singular point. Since the degree of $\cC$ is $3$, singular points are either cusps or ordinary double points. Hence $N\in\{4,5,6\}$. If $\cC$ is nonsingular, then it is an elliptic curve and, by
the Hasse-Weil bound, see \cite{AW},  $N\in I$ where $I=\{1,2,\ldots, 9\}$ and for each number $N$ belonging to  $I$ there is an elliptic curve over $\GF(4)$ with $N$ points, from \cite[Theorem 4.2]{SH}.
This completes the proof.
\end{proof}
Henceforth, we shall always suppose $q>2$ and
we denote by $\cS$  an algebraic hypersurface of $\PG(6,q^2)$ satisfying the
following hypotheses of Theorem~\ref{main}:
\begin{enumerate}[(S1)]
\item\label{S1}
  $\cS$ is an algebraic hypersurface of degree $q+1$ defined over $\GF(q^2)$;
\item\label{S2}
  $|\cS|=q^{11}+q^9+q^7+q^4+q^2+1$;
\item\label{S3} $\cS$ does not contain projective $3$-spaces (solids);
\item\label{S4} there exists a solid $\Sigma_3$ such that
  $|\cS\cap\Sigma_3|=q^4+q^2+1$.
\end{enumerate}
We first consider the behavior of $\cS$ with respect to the lines.
\begin{lem}
  \label{block}
  An algebraic hypersurface $\cT$ of degree $q+1$
  in $\PG(r,q^2)$, $q\neq 2$, with $|\cT|=|\cH(r,q^2)|$
  is a blocking set with respect to lines of $\PG(r,q^2)$
\end{lem}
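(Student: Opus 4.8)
The plan is to argue by contradiction. Suppose $\cT$ fails to block some line $\ell_0$ of $\PG(r,q^2)$ defined over $\GF(q^2)$, so that $\ell_0\cap\cT=\emptyset$ (no $\GF(q^2)$-rational points of $\cT$ lie on $\ell_0$). First I would project from $\ell_0$: the planes $\pi$ of $\PG(r,q^2)$ through $\ell_0$ are in bijection with the points of a complementary $\PG(r-2,q^2)$, so there are exactly $\frac{q^{2(r-1)}-1}{q^2-1}$ of them, and they partition the points off $\ell_0$. Since $\cT$ misses $\ell_0$, every rational point of $\cT$ lies in precisely one such plane $\langle\ell_0,P\rangle$, whence
\[
|\cT|=\sum_{\pi\supseteq\ell_0}|\cT\cap\pi|.
\]

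Next I would analyse a single plane section $\cC_\pi:=\cT\cap\pi$. As $\ell_0\subseteq\pi$ but $\ell_0\cap\cT=\emptyset$, we have $\pi\not\subseteq\cT$, so $\cC_\pi$ is a plane curve of degree exactly $q+1$ over $\GF(q^2)$. Two structural observations are decisive. First, $\cC_\pi$ has no $\GF(q^2)$-linear component: any such line, being coplanar with $\ell_0$, would meet $\ell_0$ in a $\GF(q^2)$-point, which would then be a rational point of $\cT$ on $\ell_0$ — impossible. Hence Lemma~\ref{Szi} applies (the exceptional quartics there live over $\GF(4)$ and are excluded since $q\ge3$) and gives $|\cC_\pi|\le q^3+1$. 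Second, $\cC_\pi$ cannot be a non-singular Hermitian curve, since a Hermitian curve is a unital and so meets every line, whereas $\ell_0$ is external to it. By Lemma~\ref{uni} the value $q^3+1$ is attained only by Hermitian curves, so in fact $|\cC_\pi|<q^3+1$, and Lemma~\ref{Homma} then sharpens this to $|\cC_\pi|\le q^3-q^2+2$ when $q>3$ (respectively $|\cC_\pi|\le 24$ when $q=3$).

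Finally I would sum these estimates and compare with the hypothesis $|\cT|=|\cH(r,q^2)|=\frac{(q^{r+1}+(-1)^r)(q^r-(-1)^r)}{q^2-1}$. Summing over all planes through $\ell_0$ gives
\[
|\cT|\le \frac{q^{2(r-1)}-1}{q^2-1}\,(q^3-q^2+2)
\]
for $q>3$, and the analogous bound with the factor $24$ for $q=3$. The crux — and the step I expect to be the main obstacle — is to verify that the right-hand side is \emph{strictly} smaller than $|\cH(r,q^2)|$ for every $r\ge2$ and $q\ge3$; after clearing the common denominator, the difference of the two sides is dominated by $-q^{2r}$ and is negative in the whole relevant range, producing the contradiction $|\cH(r,q^2)|=|\cT|<|\cH(r,q^2)|$. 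It is worth stressing that the weaker per-plane bounds $q^3+1$ or even $q^3$ do \emph{not} suffice: already for $r=6$ they exceed $|\cH(6,q^2)|$, so the improvement by roughly a factor $q^2$ coming from Lemma~\ref{Homma} is essential, and the case $q=3$ genuinely requires its own constant from that lemma. The hypothesis $q\neq2$ is used throughout, both to invoke Lemma~\ref{uni} and to avoid the exceptional curves in Lemma~\ref{Szi}.
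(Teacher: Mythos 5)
Your proof is correct and takes essentially the same route as the paper's: assume an external line, decompose $\cT$ along the planes through it, exclude $\GF(q^2)$-linear components, cap each plane section at $q^3+1$ by Lemma~\ref{Szi}, rule out equality via Lemma~\ref{uni} (a Hermitian curve blocks every line), sharpen with Lemma~\ref{Homma}, and contradict the point count by summing over all planes through the line. Your only refinements are cosmetic: you use the full bound $q^3-q^2+2$ (resp.\ $24$ for $q=3$) where the paper settles for $N<q^3-1$, and your count $\frac{q^{2r-2}-1}{q^2-1}$ of planes through a line is the correct one (the paper's $\frac{q^{2r-4}-1}{q^2-1}$ is a typo), while your observation that the crude bounds $q^3+1$ or $q^3$ fail for even $r$ correctly explains why Lemma~\ref{Homma} is genuinely needed.
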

\begin{proof}
Suppose on the contrary that there is a line $\ell$
of $\PG(r,q^2)$ which is disjoint from $\cT$.
Let $\alpha$ be a plane containing  $\ell$.
The algebraic  plane curve $\cC=\alpha \cap\cT$ of degree $q+1$ cannot have $\GF(q^2)$-linear components and hence it has at most $q^3+1$ points
because of Lemma~\ref{Szi}.
If $\cC$ had $q^3+1$  rational points, then from Lemma~\ref{uni},
$\cC$ would be a Hermitian curve with an external line, a contradiction since Hermitian curves are blocking sets.
Thus $N_{q^2}(\cC)\leq q^3$.
Since $q>2$, by Lemma~\ref{Homma}, $N_{q^2}(\cC)<q^3-1$  and hence every plane through $r$ meets $\cT$ in at most $q^3-1$ rational points.
Consequently, by considering all planes through $r$, we can bound the number of rational points of $\cT$ by
$N_{q^2}(\cT)\leq (q^3-1)\frac{q^{2r-4}-1}{q^2-1}=q^{2r-3}+\dots<|\cH(r,q^2)|$,
which is a contradiction.
Therefore there are no external lines to $\cT$ and so  $\cT$ is a blocking set w.r.t. lines of $\PG(r,q^2)$.
\end{proof}
\begin{remark}
  The proof of~\cite[Lemma 3.1]{AAFP} would work
  perfectly well here under the assumption $q>3$.
  The alternative argument of Lemma~\ref{block}
  is simpler and also holds for $q=3$.
\end{remark}

By the previous Lemma and assumptions~(S\ref{S1}) and
(S\ref{S2}), $\cS$ is a blocking set for the lines of $\PG(6,q^2)$
In particular,
the intersection of $\cS$ with any $3$-dimensional subspace $\Sigma$ of $\PG(6,q^2)$ is
also a blocking set with respect to lines of $\Sigma$
and hence it contains at least $q^4+q^2+1$ $\GF(q^2)$-rational points; see~\cite{BB}.

\begin{lem}
  \label{plane}
  Let $\Sigma_3$ be a solid of $\PG(6,q^2)$
  satisfying condition~(S\ref{S4}), that is
  $\Sigma_3$ meets
  $\cS$ in exactly $q^4+q^2+1$ points. Then,
  $\Pi:=\cS\cap\Sigma_3$ is a plane.
\end{lem}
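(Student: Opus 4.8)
The plan is to discard the algebraic structure of $\cS$ almost entirely and argue combinatorially. By the paragraph preceding the statement, $\Pi=\cS\cap\Sigma_3$ is a blocking set with respect to the lines of the solid $\Sigma_3\cong\PG(3,q^2)$, and by hypothesis~(S\ref{S4}) it has the minimum possible size $q^4+q^2+1$ (the bound of~\cite{BB}). I would show that \emph{any} minimum line-blocking set of $\PG(3,q^2)$ is a plane; this is the Bose--Burton characterisation, which one may also invoke directly from~\cite{BB}, but a short self-contained route is available. The only external ingredient is the classical planar fact that a blocking set of $\PG(2,q^2)$ consisting of exactly $q^2+1$ points is a line.

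First I would use $|\Pi|=q^4+q^2+1<|\Sigma_3|$ to fix a point $P\in\Sigma_3\setminus\Pi$. The $q^4+q^2+1$ lines through $P$ each meet $\Pi$ in at least one point, while the total number of incidences between $\Pi$ and these lines is at most $|\Pi|=q^4+q^2+1$; hence every line through $P$ meets $\Pi$ in exactly one point. Consequently, in any plane $\pi$ through $P$ the $q^2+1$ lines through $P$ supply exactly $q^2+1$ distinct points, so $\Pi\cap\pi$ is a blocking set of $\pi$ of size $q^2+1$, i.e.\ a line. Fixing $X\in\Pi$ and running this over the $q^2+1$ planes through the line $PX$ (each of which contains $P$), I obtain $q^2+1$ lines $m_1,\dots,m_{q^2+1}\subseteq\Pi$, all through $X$, none through $P$, and pairwise meeting only in $X$ (two such planes meet in $PX$, and $PX\cap\Pi=\{X\}$); a count gives $1+(q^2+1)q^2=q^4+q^2+1=|\Pi|$, so these lines cover $\Pi$.

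The hard part will be to prove that these $q^2+1$ concurrent lines are coplanar. Suppose not; then three of them, say $m_1,m_2,m_3$, span $\Sigma_3$. For any plane $\gamma$ with $X\notin\gamma$, each $m_i$ meets $\gamma$ in a single point $Y_i$, and the $Y_i$ are distinct (a common point would force $X\in\gamma$). Since the $m_i$ cover $\Pi$, the set $\Pi\cap\gamma$ consists of at most $q^2+1$ points, while the blocking property forces at least $q^2+1$; hence $\Pi\cap\gamma$ has exactly $q^2+1$ points and is a line, so $Y_1,Y_2,Y_3$ are collinear. But $Y_1,Y_2$ lie in $\beta:=\langle m_1,m_2\rangle$, so the line $\langle Y_1,Y_2\rangle$, and with it $Y_3=m_3\cap\gamma$, lies in $\beta$. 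Letting $\gamma$ range over the planes avoiding $X$ through an arbitrary point of $m_3$ forces all of $m_3\setminus\{X\}$, hence $m_3$, into $\beta$, contradicting non-coplanarity. Therefore all $m_i$ lie in a common plane $\beta$, whence $\Pi\subseteq\beta$; comparing cardinalities $|\Pi|=q^4+q^2+1=|\beta|$ yields $\Pi=\beta$, a plane. The whole argument rests on the coplanarity step; the remaining steps are routine bookkeeping with the blocking condition.
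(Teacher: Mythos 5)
Your proof is correct. It starts exactly where the paper starts: by the preceding discussion, $\Pi=\cS\cap\Sigma_3$ is a blocking set with respect to the lines of $\Sigma_3$, and hypothesis~(S4) says it attains the minimum size $q^4+q^2+1$. At that point the paper simply invokes the Bose--Burton theorem~\cite{BB}, which states that a minimum line-blocking set of $\PG(3,q^2)$ is a plane, and stops; you instead re-prove that characterization from scratch. Your argument is sound in every step: the incidence count through an external point $P$ showing every line through $P$ is tangent to $\Pi$; the deduction that every plane through $P$ meets $\Pi$ in a minimum planar blocking set, hence a line (using the classical planar fact you flag as your only external input); the decomposition of $\Pi$ into $q^2+1$ lines through a point $X$, pairwise meeting only in $X$; and the coplanarity step, where the collinearity of the traces $Y_1,Y_2,Y_3$ on any plane $\gamma$ avoiding $X$ correctly forces $m_3$ into $\beta=\langle m_1,m_2\rangle$ as $\gamma$ varies. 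What the two routes buy is clear: the paper's citation keeps the lemma to two lines and leans on the literature, while your version makes the lemma self-contained (modulo the elementary planar fact) at the cost of about a page; in effect you have reconstructed the standard proof of the Bose--Burton theorem in the special case needed here. Since the result is classical and already cited elsewhere in the paper (it is also used in the paragraph before the lemma to get the lower bound $q^4+q^2+1$ for every solid), the citation is the economical choice, but nothing in your argument is circular or gap-ridden.
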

\begin{proof}
  $\cS\cap\Sigma_3$ must be a blocking set for the lines of $\PG(3,q^2)$;
  also it has
  size $q^4+q^2+1$. It follows from~\cite{BB} that $\Pi:=\cS\cap\Sigma_3$ is
  a plane.
\end{proof}
\begin{lem}
  \label{cone0}
  Let $\Sigma_3$ be a solid of satisfying condition~(S\ref{S4}).
  Then,   any $4$-dimensional projective space $\Sigma_4$ through $\Sigma_3$
  meets $\cS$ in a Hermitian cone with vertex a line
  and basis a Hermitian curve.
\end{lem}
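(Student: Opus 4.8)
The plan is to set $\cY:=\cS\cap\Sigma_4$ and recognise it as a proper hypersurface of degree $q+1$ in $\Sigma_4\cong\PG(4,q^2)$: it is proper because $\cS$ contains no solid, hence no $4$-space, so $\Sigma_4\not\subseteq\cS$. By Lemma~\ref{block} $\cY$ is a blocking set for the lines of $\Sigma_4$, it contains the plane $\Pi=\cS\cap\Sigma_3$ of Lemma~\ref{plane}, and $\cY\cap\Sigma_3=\Pi$. The whole argument is then organised so as to verify the two hypotheses of Lemma~\ref{ch}: that $\cY$ has exactly $q^7+q^4+q^2+1$ rational points, and that every line of $\Sigma_4$ meets $\cY$ in $1$, $q+1$ or $q^2+1$ points. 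Once this is done, $\cY$ is a Hermitian cone with vertex a line and base a unital, and only the upgrade of the base to a Hermitian curve remains.

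For the line condition, the values $q^2+1$ (a line lying in $\cY$) and $\le q+1$ (a line not in $\cY$, by the degree) are immediate, and the genuine task is to exclude every intermediate value $2,\dots,q$. Given a line $\ell$ with $2\le|\ell\cap\cY|\le q$, I would cut $\cS$ with a plane $\alpha\supseteq\ell$ chosen so that $\cC:=\cS\cap\alpha$ has no $\GF(q^2)$-linear component; then $\cC$ is a curve of degree $q+1$ to which Lemma~\ref{Homma} applies, forcing either $|\cC|\le q^3-q^2+2$ or $|\cC|=q^3+1$. In the second case Lemma~\ref{uni} makes $\cC$ a Hermitian curve, all of whose lines meet it in $1$ or $q+1$ points, contradicting $2\le|\ell\cap\cC|\le q$. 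Hence the obstruction is the regime of small sections, and the crux is a counting argument: averaging $|\cS\cap\alpha|$ over the pencil of planes through a line $\ell_0\subseteq\Pi$, where each such section splits off the component $\ell_0\subseteq\cS$ and leaves a residual curve of degree $q$ bounded by Lemmas~\ref{Se} and~\ref{Szi}, one shows that the sections cannot all be small, so that the Hermitian regime is forced to appear and to propagate the intersection numbers $1$ and $q+1$ to every line.

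For the point count I would use the pencil of the $q^2+1$ solids $\Sigma$ through $\Pi$ inside $\Sigma_4$; they cover $\Sigma_4$ and meet pairwise exactly in $\Pi$, so
\[
|\cY|=|\Pi|+\sum_{\Sigma\supseteq\Pi}\bigl|(\cY\cap\Sigma)\setminus\Pi\bigr|.
\]
Each $\cY\cap\Sigma$ is a surface of degree $q+1$ in $\Sigma\cong\PG(3,q^2)$ containing the plane $\Pi$, hence of the form $\Pi\cup\cR_\Sigma$ with $\cR_\Sigma$ of degree $q$. The solid $\Sigma_3$ contributes $0$; for each of the remaining $q^2$ solids the line condition together with Lemma~\ref{hk0} pins $\cR_\Sigma$ down to a union of $q$ planes through a common line, giving $\bigl|(\cY\cap\Sigma)\setminus\Pi\bigr|=q^5$. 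Summing yields $|\cY|=(q^4+q^2+1)+q^2\cdot q^5=q^7+q^4+q^2+1$, as needed.

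With both hypotheses verified, Lemma~\ref{ch} gives that $\cY$ is a Hermitian cone with vertex a line $\ell$ and base a unital $\cU$ in a plane $\Theta$ skew to $\ell$. Since $\Theta\not\subseteq\cS$ (otherwise $\cU=\Theta$ would be far too large), $\cU=\cS\cap\Theta$ is a plane curve of degree $q+1$ with $q^3+1$ points and no linear component, because a unital contains no $q^2+1$ collinear points; Lemma~\ref{uni} then identifies $\cU$ as a Hermitian curve, which completes the proof. The main obstacle is the rigidity step: the plane-section bounds only produce a gap rather than an exact value, so the real work is to force the Hermitian regime for enough sections and thereby to extract both the exact line numbers and the exact point count.
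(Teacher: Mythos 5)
Your overall frame --- verify the two hypotheses of Lemma~\ref{ch} and then upgrade the base unital to a Hermitian curve via Lemma~\ref{uni} --- matches the endpoint of the paper's proof, and your final upgrade step is correct (the paper even leaves it implicit). But both of your verification steps have genuine gaps, and they interlock.

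The point count is the fatal one. You claim that for each of the $q^2$ solids $\Sigma\neq\Sigma_3$ through $\Pi$ inside $\Sigma_4$, ``the line condition together with Lemma~\ref{hk0}'' pins $\cY\cap\Sigma$ down to $\Pi$ plus $q$ further planes, giving $\bigl|(\cY\cap\Sigma)\setminus\Pi\bigr|=q^5$. Lemma~\ref{hk0} only gives the upper bound $|\cY\cap\Sigma|\leq q^5+q^4+q^2+1$; nothing local forces this maximum to be attained. Indeed $\cY\cap\Sigma=\Pi$ is perfectly consistent with the line condition (a plane meets every line in $1$ or $q^2+1$ points) and with being a blocking set: the residual degree-$q$ component of the section could have all its rational points inside $\Pi$. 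The only thing that rules this out is hypothesis~(S\ref{S2}), the global value $|\cS|=q^{11}+q^9+q^7+q^4+q^2+1$, which your proposal never invokes. The paper's proof is driven precisely by this: summing the bound of Lemma~\ref{hk0} over \emph{all} $q^6+q^4+q^2+1$ solids through $\Pi$ in $\PG(6,q^2)$ gives $|\cS|\leq(q^6+q^4+q^2)q^5+q^4+q^2+1$, which equals $|\cS|$ exactly, so equality holds in every solid through $\Pi$ other than $\Sigma_3$; then Serre's theorem on maximal surfaces \cite{S92} splits each such maximal section into $q+1$ planes of a pencil. This simultaneously yields the point count $q^7+q^4+q^2+1$ in $\Sigma_4$ and the decomposition of $\cY$ into $q^3+1$ planes through lines of $\Pi$.

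Your line-condition step is also not a proof but a deferral: you concede the crux is an averaging argument over planes through a line of $\Pi$, and as set up it would fail. For a hypothetical line $\ell$ with $2\leq|\ell\cap\cY|\leq q$, planes through $\ell$ may contain lines of $\cS$ (through the points of $\ell\cap\cS$), so Lemmas~\ref{Homma} and~\ref{uni} do not apply to them, and you have no control over how many planes through $\ell$ are line-free; the analogous count in Lemma~\ref{block} works only because a line \emph{disjoint} from the hypersurface automatically makes every plane section free of linear components. In the paper the intermediate secant sizes are excluded not by averaging but by the already-established plane decomposition: a line meeting $\Pi$ meets the $q+1$ planes of the pencil in its span with $\Pi$ in $1$ or $q+1$ points, while a line skew to $\Pi$ lies in a plane $\langle P,\ell\rangle$ with $P\in\Pi$ whose section is shown, via the pencil structure, to have exactly $q^3+1$ points and no linear components, hence to be a Hermitian curve by Lemma~\ref{uni}. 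So the workable logical order is: global count first, plane decomposition second, line condition third. Your proposal runs these in reverse, and in that order neither half closes.
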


\begin{proof}
  Consider all of the $q^6+q^4+q^2+1$
  subspaces $\overline{\Sigma}_3$ of dimension $3$  in $\PG(6,q^2)$
   containing $\Pi$.

   From Lemma~\ref{hk0} and condition~(S\ref{S3}) we have
   $|\overline{\Sigma}_3 \cap  \cS|\leq q^5+q^4+q^2+1$. Hence,
   \[|\cS|=(q^7+1)(q^4+q^2+1)\leq (q^6+q^4+q^2)q^5+q^4+q^2+1=|\cS|\]
   Consequently, $|\overline{\Sigma}_3\cap\cS|= q^5+q^4+q^2+1$ for all  $\overline{\Sigma}_3 \neq \Sigma_3$ such that $\Pi \subset \overline{\Sigma}_3$.

   Let $C:=\Sigma_4\cap\cS$. Counting the number of rational points of $C$ by considering the intersections with  the $q^2+1$ subspaces $\Sigma'_3$ of
   dimension $3$ in $\Sigma_4$ containing the plane $\Pi$  we get
  \[ |C|=q^2\cdot q^5+q^4+q^2+1=q^7+q^4+q^2+1. \]
  In particular, $C\cap\Sigma'_3$ is
  a maximal surface of degree $q+1$; so it must split in $q+1$ distinct
  planes through a line of $\Pi$; see~\cite{S92}.
  So $C$ consists of $q^3+1$ distinct planes belonging to distinct $q^2$ pencils, all  containing $\Pi$ ; denote by ${\cL}$
  the family of these planes. Also for
  each $\Sigma_3'\neq\Sigma_3$, there is a line $\ell'$
  such that all the planes
  of $\mathcal L$ in $\Sigma_3'$ pass through $\ell'$.
  It is now straightforward to see that any line contained in $C$
  must necessarily belong to one of the planes of $\mathcal L$ and
  no plane not in $\mathcal L$ is contained in $C$.

  In order to get the result it is now enough to show that
  a line of $\Sigma_4$ meets $C$ in either $1$, $q+1$ or $q^2+1$
  points. To this purpose, let $\ell$ be a line of $\Sigma_4$ and
  suppose $\ell\not\subseteq C$. Then, by Bezout's theorem,
  \[ 1\leq |\ell\cap C|\leq q+1. \]
  Assume $|\ell\cap C|>1$. Then we can distinguish two cases:
  \begin{enumerate}
  \item $\ell\cap\Pi\neq\emptyset$. If $\ell$ and $\Pi$ are incident,
    then we can consider the $3$-dimensional subspace
    $\Sigma_3':=\langle\ell,\Pi\rangle$. Then $\ell$ must meet
    each plane of $\mathcal L$ in $\Sigma_3'$ in different points (otherwise
    $\ell$ passes through the intersection of these planes and
    then $|\ell\cap C|=1$). As there are $q+1$ planes of $\mathcal L$
    in $\Sigma_3'$, we have $|\ell\cap C|=q+1$.
  \item $\ell\cap\Pi=\emptyset$. Consider the plane $\Lambda$
    generated by a point $P\in\Pi$ and $\ell$.
    Clearly $\Lambda\not\in{\mathcal L}$.
    The curve
    $\Lambda\cap S$ has degree $q+1$ by construction, does not
    contain lines (for otherwise $\Lambda\in{\mathcal L}$) and
    has $q^3+1$ $\GF(q^2)$-rational
    points (by a counting argument). So from Lemma \ref{uni} it is
    a Hermitian curve . It follows that $\ell$ is a $q+1$ secant.
  \end{enumerate}
  We can now apply   Lemma \ref{ch} to see
  that $C_1$ is a Hermitian cone with vertex a line.
\end{proof}
\begin{lem}
  \label{cone1}
  Let $\Sigma_3$ be a space satisfying condition~(S\ref{S4}) and take
  $\Sigma_5$ to be a $5$-dimensional projective space with
  $\Sigma_3\subseteq\Sigma_5$.
  Then $\cS\cap\Sigma_5$ is a Hermitian cone with vertex a point
  and basis a Hermitian hypersurface $\cH(4,q^2)$.
\end{lem}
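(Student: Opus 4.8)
The plan is to prove that $C:=\cS\cap\Sigma_5$ is the cone over a non-singular $\cH(4,q^2)$ with vertex a point of the plane $\Pi:=\cS\cap\Sigma_3$ (a plane by Lemma~\ref{plane}). I first fix the cardinality. There are exactly $q^2+1$ hyperplanes $\Sigma_4^{(i)}$ of $\Sigma_5$ through $\Sigma_3$, and they partition $\Sigma_5\setminus\Sigma_3$; by Lemma~\ref{cone0} each meets $\cS$ in a Hermitian cone of $q^7+q^4+q^2+1$ points, all sharing the $q^4+q^2+1$ points of $\Pi$. Hence
\[
|C|=(q^4+q^2+1)+(q^2+1)\bigl(q^7+q^4+q^2+1-(q^4+q^2+1)\bigr)=q^9+q^7+q^4+q^2+1,
\]
which is precisely $1+q^2\,|\cH(4,q^2)|$, the size of a cone with vertex a point over $\cH(4,q^2)$.

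Next I would show that every line of $\Sigma_5$ meets $C$ in $1$, $q+1$ or $q^2+1$ points. A line that meets $\Sigma_3$ spans, together with $\Sigma_3$, a hyperplane through $\Sigma_3$, hence lies in one of the $\Sigma_4^{(i)}$; there it meets $C=\cS\cap\Sigma_5$ in the same number of points as the Hermitian cone $\cS\cap\Sigma_4^{(i)}$, namely $1$, $q+1$ or $q^2+1$. For a line $\ell$ disjoint from $\Sigma_3$ and not contained in $\cS$ (otherwise $|\ell\cap C|=q^2+1$), I would look at the plane $\langle P,\ell\rangle$ spanned by $\ell$ and a point $P\in\Pi$: its section by $\cS$ is a curve of degree $q+1$, and Lemma~\ref{uni} together with Lemma~\ref{Homma} forbids the values $2,\dots,q$ for $|\ell\cap C|$, exactly as in case~(2) of the proof of Lemma~\ref{cone0}.

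The decisive step is to locate the vertex. By Lemma~\ref{cone0} each section $\cS\cap\Sigma_4^{(i)}$ is a Hermitian cone with vertex a line $V_i$; since $\Pi$ is a plane contained in that cone it must be one of its generating planes, and every generating plane contains the vertex line, so $V_i\subseteq\Pi$. Because any line through a point $P\in\Pi\subseteq\Sigma_3$ already lies in some $\Sigma_4^{(i)}$, such a line can be a $(q+1)$-secant of $C$ only if it is a $(q+1)$-secant of $\cS\cap\Sigma_4^{(i)}$, and through a vertex point of that cone there is none; thus a point $P\in\Pi$ lies on no $(q+1)$-secant of $C$ if and only if $P\in\bigcap_i V_i$. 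It then remains to show that the $q^2+1$ lines $V_i$ of the plane $\Pi$ have exactly one common point $V$. I expect this to be the main obstacle: any two lines of a projective plane meet, but concurrency of all $q^2+1$ of them is a genuine constraint, so proving it will require global information about $C$ (for instance a count of its $(q+1)$-secants) rather than the section-by-section data alone. Once concurrency is established, every $X\in C$ lies on a generating plane of the cone $\cS\cap\Sigma_4^{(i)}$ containing it, that plane contains $V_i\ni V$, whence $\langle V,X\rangle\subseteq C$; so $C$ is a cone with vertex the point $V$.

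Finally I would identify the base. Pick a hyperplane $\Theta$ of $\Sigma_5$ with $V\notin\Theta$ and set $B:=\cS\cap\Theta=C\cap\Theta$. Since $C$ is a cone with vertex $V$, we have $|B|=(|C|-1)/q^2=q^7+q^5+q^2+1=|\cH(4,q^2)|$; moreover $B$ is a hypersurface of degree $q+1$ in $\Theta\cong\PG(4,q^2)$, it contains no plane (a plane in $B$ would span, together with $V$, a solid inside $\cS$, against~(S\ref{S3})), and the plane $\pi_0:=\Sigma_3\cap\Theta$ meets $B$ in $\Pi\cap\pi_0$, a line of $q^2+1$ points. These are exactly the hypotheses of the four-dimensional characterisation of~\cite{AAFP} (in the form extended to $q=3$ in the present paper), so $B=\cH(4,q^2)$ is non-singular, and therefore $C$ is the Hermitian cone with vertex the point $V$ and base $\cH(4,q^2)$.
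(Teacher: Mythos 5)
Your outline reproduces the paper's skeleton faithfully --- the count $|\cS\cap\Sigma_5|=q^9+q^7+q^4+q^2+1$, the observation that each vertex line $V_i$ (the paper's $\ell_i$) lies in $\Pi$ because $\Pi$ is a generator plane of the cone $C_i=\cS\cap\Sigma_4^i$, and the endgame via the characterization of \cite{AAFP} applied to a hyperplane section missing the vertex --- but the step you explicitly defer, the concurrency of the $q^2+1$ vertex lines inside $\Pi$, is not a technicality: it is the entire mathematical content of the paper's proof, and your proposal is therefore incomplete at its decisive point. The paper closes this gap by a dichotomy. Fix a line $m\subseteq C_1$ meeting $\Pi$ exactly in a point $P_1$, and consider the planes $\Pi''$ with $m\subseteq\Pi''\subseteq\cS\cap\Sigma_5$. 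Case (a): some such $\Pi''$ lies in none of the $\Sigma_4^i$. Then the $q^2+1$ lines $m_i:=\Pi''\cap\Sigma_4^i$ all pass through $P_1$, each is contained in the cone $C_i$ and so meets its vertex line $\ell_i\subseteq\Pi$; since $m_i$ meets $\Pi$ only in $P_1$, this forces $P_1\in\ell_i$ for every $i$ --- the concurrency you need. Case (b): every plane through $m$ inside $\cS\cap\Sigma_5$ lies in some $\Sigma_4^i$. The paper rules this out by distributing the known total $|\cS\cap\Sigma_5|$ over the pencil of planes through $m$: the count forces every plane through $m$ other than the special ones to be $(q^3+q^2+1)$-secant, hence by Lemma~\ref{Se} a pencil of $q+1$ concurrent lines; but then, for a suitable $P_2\in\Sigma_4^2\cap\cS$, the plane $\Xi=\langle m,P_2\rangle$ would consist of $q+1$ lines through $P_1$, which is incompatible with $\langle P_1,P_2\rangle$ being a $(q+1)$-secant avoiding $\ell_2$. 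So the ``global information'' you correctly predicted would be needed is a point count of $\cS\cap\Sigma_5$ over the planes through a well-chosen line of $C_1$, not a count of $(q+1)$-secants; without an argument of this kind your proof does not go through.

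Two further remarks on the parts you did write out. First, your ordering differs slightly from the paper's: you establish the cone structure first and then deduce for free that the base section $B$ contains no planes (a plane in $B$ would join with $V$ to a solid in $\cS$, against (S\ref{S3})); the paper instead applies \cite{AAFP} to $\cW=\overline{\Sigma}_4\cap\cS$ \emph{before} having the cone structure, and so must exclude planes in $\cW$ directly --- a putative plane $\Pi'''\subseteq\cW$ would be covered by $q^2+1$ lines $t_i=\Pi'''\cap\Sigma_4^i$ concurrent at a point $P\in\Pi$, and each $t_i$ must meet $\ell_i$, forcing $P=P_1\notin\overline{\Sigma}_4$, a contradiction --- and afterwards verify that the join of $P_1$ with the resulting $\cH(4,q^2)$ really lies in $\cS$. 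Your ordering is legitimate, but note it leans even more heavily on the concurrency you left unproven. Second, your claim that Lemmas~\ref{uni} and~\ref{Homma} alone ``forbid the values $2,\dots,q$'' for a line $\ell$ disjoint from $\Sigma_3$ is too quick: the upper bound of Lemma~\ref{Homma} does not by itself restrict $|\ell\cap C|$; as in case (2) of Lemma~\ref{cone0} one must first show by a counting argument that the curve $\langle P,\ell\rangle\cap\cS$ is line-free with exactly $q^3+1$ rational points, and only then does Lemma~\ref{uni} make it Hermitian and $\ell$ a $(q+1)$-secant. That step is repairable; the concurrency gap is the one that genuinely blocks the proof.
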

\begin{proof}
  Let \[ \Sigma_4:=\Sigma_4^1, \Sigma_4^2,\dots, \Sigma_4^{q^2+1} \]
  be the $4$-spaces through $\Sigma_3$ contained in $\Sigma_5$.
  Put $C_i:=\Sigma_4^i\cap\cS$, for all  $i \in \{1,\ldots, q^2+1\}$ and $\Pi=\Sigma_3\cap\cC_1$.
  From Lemma \ref{cone0} $C_i$ is a Hermitian cone with vertex a line, say $\ell_i$. Furthermore
    $\Pi\subseteq\Sigma_3\subseteq\Sigma_4^i$ where $\Pi$ is a plane.
  Choose a plane $\Pi'\subseteq\Sigma_4^1$ such that
  $m:=\Pi'\cap C_1$ is a line $m$ incident with $\Pi$ but not contained
  in it. Let $P_1:=m\cap\Pi$.
  It is straightforward to see that in $\Sigma_4^1$
  there is exactly $1$ plane
  through $m$ which is a $(q^4+q^2+1)$-secant, $q^4$ planes which are
  $(q^3+q^2+1)$-secant and $q^2$ planes which are $(q^2+1)$-secant.
  Also $P_1$ belongs to the line $\ell_1$.
  There are now two cases to consider:
  \begin{enumerate}[(a)]
  \item\label{(a2)} There is a plane $\Pi''\neq\Pi'$ not contained
    in $\Sigma_4^i$ for all $i=1,\dots,q^2+1$ with
    $m\subseteq\Pi''\subseteq S\cap\Sigma_5$.

    We first show that the vertices of the cones
    $C_i$ are all concurrent.
    Consider $m_i:=\Pi''\cap\Sigma_4^i$. Then
    $\{m_i: i=1,\dots,q^2+1\}$ consists of $q^2+1$ lines
    (including $m$) all through $P_1$.
    Observe that for all $i$, the line $m_i$ meets the
    vertex $\ell_i$ of the cone $C_i$ in $P_i\in\Pi$.
    This forces $P_1=P_2=\dots=P_{q^2+1}$. So
    $P_1\in\ell_1,\dots,\ell_{q^2+1}$.

    Now let $\overline{\Sigma}_4$ be a $4$-dimensional space in
    $\Sigma_5$ with
    $P_1\not\in\overline{\Sigma}_4$; in particular
    $\Pi\not\subseteq\overline{\Sigma}_4$.
    Put also $\overline{\Sigma}_3:=\Sigma_4^1\cap\overline{\Sigma}_4$.
    Clearly, $r:=\overline{\Sigma}_3\cap\Pi$ is a line and
    $P_1\not\in r$. So $\overline{\Sigma}_3\cap\cS$ cannot be
    the union of $q+1$ planes, since if this were to be the
    case, these planes would have to pass through the vertex
    $\ell_1$. It follows that $\overline{\Sigma}_3\cap\cS$ must
    be a Hermitian cone with vertex a point and basis a
    Hermitian curve. Let
    $\cW:=\overline{\Sigma}_4\cap\cS$.
    The intersection $\cW\cap\Sigma_4^i$ as $i$ varies
    is a Hermitian cone with basis a Hermitian curve, so,
    the points of $\cW$ are
    \[ |\cW|=(q^2+1)q^5+q^2+1=(q^2+1)(q^5+1); \]
    in particular, $\cW$ is a hypersurface of $\overline{\Sigma}_4$ of
    degree $q+1$ such that there
    exists a plane of $\overline{\Sigma}_4$ meeting
    $\cW$ in just one line (such planes exist in
    $\overline{\Sigma}_3$).
    Also suppose $\cW$ to contain planes and let $\Pi'''\subseteq\cW$
    be such a plane.
    Since  $\Sigma_4^i\cap\cW$ does not contain planes,
    all  $\Sigma_4^i$ meet $\Pi'''$ in a line $t_i$. Also
    $\Pi'''$ must be contained in $\bigcup_{i=1}^{q^2+1} t_i$.
    This implies that the set $\{t_i\}_{i=1,\dots,q^2+1}$ consists
    of $q^2+1$ lines through a point $P\in \Pi \setminus \{P_1\}$.

    Furthermore each line $t_i$ passing through $P$  must meet the radical line $\ell_i$ of the Hermitian cone $\cS \cap\Sigma_4^i$  and this forces $P$ to coincide with $P_1$, a contradiction.
    It follows that $\cW$ does not contain planes.

        So by the characterization of $\cH(4,q^2)$
    of~\cite{AAFP}
    we have that $\cW$ is a Hermitian variety $\cH(4,q^2)$.

    We also have that $|\cS\cap\Sigma_5|=|P_1\cH(4,q^2)|$. Let now $r$ be any
    line of $\cH(4,q^2)=\cS\cap\overline{\Sigma}_4$ and let $\Theta$ be
    the plane $\langle r,P_1\rangle$. The plane $\Theta$ meets
    $\Sigma_4^i$ in a line $q_i\subseteq\cS$ for each $i=1,\dots,q^2+1$ and these lines
    are concurrent in $P_1$. It follows that all the points of $\Theta$ are
    in $S$. This completes the proof for the current case and shows that
    $\cS\cap\Sigma_5$ is a Hermitian cone $P_1\cH(4,q^2)$.
  \item\label{(b2)} All planes $\Pi''$ with
    $m\subseteq\Pi''\subseteq\cS\cap\Sigma_5$
    are contained in $\Sigma_4^i$ for some $i=1,\dots,q^2+1$.
    We claim that this case cannot happen.
    We can suppose without loss of generality $m\cap\ell_1=P_1$
    and $P_1\not\in\ell_i$ for all $i=2,\dots,q^2+1$.
    Since the intersection of the  subspaces $\Sigma_4^i$  is $\Sigma_3$,  there is exactly one plane through $m$  in $\Sigma_5$ which is
    $(q^4+q^2+1)$-secant,
    namely the plane $\langle\ell_1,m\rangle$.
    Furthermore, in $\Sigma_4^1$ there are $q^4$ planes
    through $m$ which are $(q^3+q^2+1)$-secant and
    $q^2$ planes which are $(q^2+1)$-secant.
    We can provide an upper bound to the points of $\cS\cap\Sigma_5$ by
    counting the number of points of $\cS\cap\Sigma_5$ on  planes in $\Sigma_5$
    through $m$ and observing that a plane through $m$ not
    in $\Sigma_5$ and not contained in $\cS$
    has at most $q^3+q^2+1$ points in common with $\cS\cap\Sigma_5$.
    So
    \[ |\cS\cap\Sigma_5|\leq q^6\cdot q^3+q^7+q^4+q^2+1. \]
    As
    $|\cS\cap\Sigma_5|=q^9+q^7+q^4+q^2+1$,
    all planes through $m$ which are neither $(q^4+q^2+1)$-secant
    nor $(q^2+1)$-secant are $(q^3+q^2+1)$-secant. That is to say
    that all of these planes meet $\cS$ in a curve of degree $q+1$
    which must split into $q+1$ lines through a point because of Lemma \ref{Se}.

    Take now $P_2\in\Sigma_4^2\cap\cS$ and consider the plane
    $\Xi:=\langle m,P_2\rangle$. The line $\langle P_1,P_2\rangle$
    is contained in $\Sigma_4^2$; so it must be a $(q+1)$-secant,
    as it does not meet the vertex line $\ell_2$ of $C_2$ in
    $\Sigma_4^2$. Now, $\Xi$ meets every of $\Sigma_4^i$ for
    $i=2,\dots,q^2+1$ in a line through $P_1$ which is either a
    $1$-secant or a $q+1$-secant; so
    \[ |\cS\cap\Xi|\leq q^2(q)+q^2+1=q^3+q^2+1. \]
    It follows $|\cS\cap\Xi|=q^3+q^2+1$ and $\cS\cap\Xi$ is
    a set of $q+1$ lines all through the point $P_1$.
    This contradicts our previous construction.
  \end{enumerate}
  \end{proof}

  \begin{lem}
    \label{sections}
       Every hyperplane of $\PG(6,q^2)$ meets $\cS$
    either in  a non-singular Hermitian variety
    $\cH(5,q^2)$ or in a cone over a Hermitian hypersurface $\cH(4,q^2)$.
  \end{lem}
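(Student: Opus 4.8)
The plan is to reduce an arbitrary hyperplane section of $\cS$ to the sections already understood through $\Sigma_3$, to fix the two possible section sizes by a double counting argument, and only then to recognise the geometric structure. The first step is to globalise Lemmas~\ref{cone0} and~\ref{cone1} to a statement about all lines: \emph{every line $\ell$ of $\PG(6,q^2)$ meets $\cS$ in $1$, $q+1$ or $q^2+1$ points}. Indeed $\langle\ell,\Sigma_3\rangle$ has dimension at most $5$, so $\ell$ lies in some hyperplane $\Sigma_5\supseteq\Sigma_3$; by Lemma~\ref{cone1} the section $\cS\cap\Sigma_5$ is a Hermitian cone $P\cH(4,q^2)$, in which every line meets the cone in $1$, $q+1$ or $q^2+1$ points. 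Hence, for every hyperplane $H$, the section $\cS\cap H$ is a hypersurface of $H\cong\PG(5,q^2)$ of degree $q+1$, containing no solid (as $\cS$ contains none) and meeting every line in $1$, $q+1$ or $q^2+1$ points.

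Write $a:=|\cH(5,q^2)|=q^9+q^7+q^5+q^4+q^2+1$ and $b:=q^9+q^7+q^4+q^2+1$ for the sizes of a secant and of a tangent hyperplane section of a non-singular $\cH(6,q^2)$. Since $|\cS|=|\cH(6,q^2)|$, the sums $\sum_H|\cS\cap H|$ and $\sum_H|\cS\cap H|^2$ over all hyperplanes $H$ depend only on $|\cS|$ and therefore agree with the corresponding sums for $\cH(6,q^2)$; as all sections of $\cH(6,q^2)$ have size $a$ or $b$, this yields
\[ \sum_H\bigl(|\cS\cap H|-a\bigr)\bigl(|\cS\cap H|-b\bigr)=0. \]
Thus it is enough to prove the uniform bounds $b\le|\cS\cap H|\le a$: every summand is then non-positive, so all of them vanish and $|\cS\cap H|\in\{a,b\}$ for each hyperplane $H$.

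Establishing these bounds is the crux. For $H\supseteq\Sigma_3$ one already has $|\cS\cap H|=b$ by Lemma~\ref{cone1}. For a general $H$, I would use the $q^4+q^2+1$ hyperplanes $H'\supseteq\Sigma_3$: each $F:=H\cap H'$ is a $4$-space through the plane $\pi_0:=H\cap\Sigma_3$, these are precisely the $q^4+q^2+1$ four-spaces of $H$ through $\pi_0$, every point of $H\setminus\pi_0$ lies on exactly $q^2+1$ of them, and $\cS\cap F$ is a hyperplane section of the \emph{known} Hermitian cone $\cS\cap H'=P\cH(4,q^2)$, hence of one of a few sizes close to $q^7$. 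Summing $|\cS\cap F|$ over the $H'$ and dividing out the overcount gives
\[ |\cS\cap H|=|\cS\cap\pi_0|+\tfrac{1}{q^2+1}\Bigl(\textstyle\sum_{H'}|\cS\cap F|-(q^4+q^2+1)\,|\cS\cap\pi_0|\Bigr), \]
expressing $|\cS\cap H|$ through $|\cS\cap\pi_0|$ and the distribution of the section types of the cones $\cS\cap H'$. Controlling that distribution tightly enough to force $|\cS\cap H|\in[b,a]$ (using the blocking property of Lemma~\ref{block} to keep $\cS\cap\pi_0$ and each $\cS\cap F$ from being too small) is the main difficulty of the argument.

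It remains to pass from sizes to structure. A hyperplane with $|\cS\cap H|=a$ is a degree $q+1$ hypersurface of $\PG(5,q^2)$ with exactly $|\cH(5,q^2)|$ points and containing no subspace of dimension greater than $(5-1)/2=2$, so it is a non-singular $\cH(5,q^2)$ by the odd-dimensional characterisation of~\cite{HK,HKf}. A hyperplane with $|\cS\cap H|=b$ has fewer points, hence cannot be non-singular; bootstrapping exactly as in Lemma~\ref{cone1} from the known four-space sections $H\cap H'$ (with $H'\supseteq\Sigma_3$) and invoking the characterisation of $\cH(4,q^2)$ from~\cite{AAFP}, one locates a point vertex and identifies the base as $\cH(4,q^2)$, so that $\cS\cap H$ is a cone over $\cH(4,q^2)$. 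This gives the claimed dichotomy for every hyperplane of $\PG(6,q^2)$.
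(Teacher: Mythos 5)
Your opening observation is correct and elegant: since any line $\ell$ spans with $\Sigma_3$ a space of dimension at most $5$, Lemma~\ref{cone1} does give that every line of $\PG(6,q^2)$ meets $\cS$ in $1$, $q+1$ or $q^2+1$ points. Your variance identity is also correct, and it is in fact exactly equivalent to the paper's counting system~\eqref{tg01}: both amount to the statement that $\sum_H\bigl(|\cS\cap H|-a\bigr)\bigl(|\cS\cap H|-b\bigr)$ depends only on $|\cS|$ and hence vanishes. The problem is what you feed into it. The paper can use this count because its structural analysis first shows that \emph{every} hyperplane section has one of exactly three sizes $b<c<a$ (with $c=q^9+q^7+q^5+q^2+1$), obtained by decomposing $\Lambda\cap\cS$ along the pencil of $q^2+1$ hyperplanes $S_5^j$ through a fixed $4$-space $\Sigma_4^1\supseteq\Sigma_3$, each of which meets $\cS$ in a known cone $P^j\cH(4,q^2)$, and classifying the possible sections $\cW_j$; the identity then kills $x_c$. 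You instead want to feed in the bounds $b\le|\cS\cap H|\le a$, and this is precisely the step you leave open, conceding it is ``the main difficulty.'' It genuinely is: your averaging formula over the $q^4+q^2+1$ four-spaces $F=H\cap H'$ does not yield these bounds with naive estimates. The possible sizes of $\cS\cap F$ (hyperplane sections of a cone $P\cH(4,q^2)$) are $q^7+q^4+q^2+1$, $q^7+q^5+q^2+1$ and $q^7+q^5+q^4+q^2+1$, and with $|\cS\cap\pi_0|=q^2+1$ the formula only confines $|\cS\cap H|$ to an interval of the rough form $[\,q^9+q^6+\cdots,\ q^9+q^7+q^6+\cdots\,]$, which strictly contains $[b,a]$ on both sides. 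So the proposal has a hole exactly where the paper does its real work: without either the two-sided bound or the paper's three-value restriction, the variance argument concludes nothing.

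A second, smaller gap is the structural identification of the size-$b$ sections. You assert that one can ``bootstrap exactly as in Lemma~\ref{cone1}'' to locate a point vertex and recognise a cone over $\cH(4,q^2)$, but Lemma~\ref{cone1} applies only to $5$-spaces through a solid meeting $\cS$ in exactly $q^4+q^2+1$ points (equivalently, in a plane), and you never produce such a solid inside an arbitrary hyperplane $H$ of size $b$. In the paper this is not an afterthought but an output of the same case analysis: when the section is not of size $a$ or $c$, some $\cW_j$ is a cone with vertex a line, which yields a solid of $\Lambda$ meeting $\cS$ in a plane, and only then does Lemma~\ref{cone1}, applied inside $\Lambda$, give the cone $P'\cH(4,q^2)$. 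Your final step for size-$a$ sections (the odd-dimensional characterisation of $\cH(5,q^2)$ from~\cite{HK,HKf}, using that $\cS$ contains no solids) does match the paper and is fine.
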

  \begin{proof}
    Let $\Sigma_3$ be a solid satisfying condition~(S\ref{S4}).
    Denote by $\Lambda$  a hyperplane of $\PG(6,q^2)$. If $\Lambda$ contains $\Sigma_3$ then, from Lemma \ref{cone1} it follows that
    $\Lambda\cap\cS$ is a Hermitian cone $P\cH(4,q^2)$.

   Now assume that $\Lambda$ does not contain   $\Sigma_3$.
   Denote by $S_5^j$, with $j=1,\ldots,q^2+1$ the $q^2+1$ hyperplanes through $\Sigma_4^1$, where as before, $\Sigma_4^1$ is a $4$-space containing $\Sigma_3$. By Lemma~\ref{cone1} again we get that
   $S_5^j \cap\cS=P^j\cH(4,q^2)$.
  We count  the number of rational points of $\Lambda\cap\cS$ by
  studying the intersections  of  $S_5^j \cap\cS$ with $\Lambda$
  for all $j \in \{1,\ldots, q^2+1\}$.
  Setting  $\cW_j:=S_5^j \cap\cS \cap \Lambda$,
  $\Omega:=\Sigma_4^1 \cap\cS \cap \Lambda$ then

    \[|\cS\cap \Lambda|=\sum_{j}|\cW_j\setminus\Omega|+|\Omega|.\]
    If $\Pi$ is a plane of $\Lambda$
    then  $\Omega$ consists of $q+1$  planes of a pencil.
    Otherwise let $m$ be the line in which $\Lambda$
    meets the plane $\Pi$. Then $\Omega$
    is  either a Hermitian cone $P_0\cH(2,q^2)$, or $q+1$
    planes of a pencil, according as the vertex $P^j\in \Pi$
    is an external point with respect to $m$ or not.

    In the former case $\cW_j$ is a non singular Hermitian variety
    $\cH(4,q^2)$ and thus
    $ |\cS \cap \Lambda|= (q^2+1)(q^7) +q^5+q^2+1 = q^9+q^7+q^5+q^2+1$.

    In the case in which $\Omega$ consists of $q+1$  planes of a pencil then
    $\cW_j$  is  either a  $P_0\cH(3,q^2)$  or a Hermitian cone with vertex a line and basis a Hermitian curve $\cH(2,q^2)$.

   If there is at least one index $j$ such that $\cW_j= \ell_1\cH(2,q^2)$ then,  there must be a $3$-dimensional space $\Sigma'_3$   of $S_5^j \cap\Lambda$ meeting $\cS$ in a generator. Hence, from Lemma \ref{cone1} we get that $\cS \cap\Lambda$ is a Hermitian cone $ P'\cH(4,q^2)$.

Assume that for all $j \in \{1,\ldots, q^2+1\}$,  $\cW_j$  is  a  $P_0\cH(3,q^2)$. In this case
  \[ |\cS \cap\Lambda|=(q^2+1)q^7+ (q+1)q^4+q^2+1=q^9+q^7+q^5+q^4+q^2+1 =|\cH(5,q^2)|. \]

We are going to prove that  the intersection numbers of $\cS$ with hyperplanes are only two that is   $q^9+q^7+q^5+q^4+q^2+1$ or  $q^9+q^7+q^4+q^2+1$.

Denote by $x_i$ the number of hyperplanes meeting $\cS$ in $i$
    rational points with $i \in \{q^9+q^7+q^4+q^2+1, q^9+q^7+q^5+q^2+1, q^9+q^7+q^5+q^4+q^2+1\}$. Double counting arguments give the following equations for  the integers $x_i$:
\begin{equation} \label{tg01}
\left\{\begin{array}{l}
    \sum_{i} x_i=q^{12}+q^{10}+q^8+q^6+q^4+q^2+1 \\
    \\
    \sum_{i}ix_i=|\cS|(q^{10}+q^8+q^6+q^4+q^2+1)\\
    \\
     \sum_{i=1}i(i-1)x_i=|\cS| (|\cS|-1)(q^8+q^6+q^4+q^2+1).
   \end{array}  \right.
\end{equation}
Solving \eqref{tg01} we obtain $x_{q^9+q^7+q^5+q^2+1}=0$.
In the case in which  $|\cS \cap \Lambda|=|\cH(5,q^2)|$, since $\cS\cap\Lambda$ is an algebraic hypersurface of degree $q+1$ not containing $3$-spaces, from  \cite[Theorem 4.1]{AW} we get that  $\cS\cap\Lambda$ is a Hermitian variety $\cH(5,q^2)$ and  this completes the proof.
  \end{proof}
\begin{proof}[Proof of Theorem~\ref{main}]
The first part of  Theorem \ref{main} follows from Lemma ~\ref{plane}.
  From Lemma \ref{sections},  $\cS$ has the same intersection numbers with respect to hyperplanes and $4$-spaces as  a non-singular Hermitian variety of $\PG(6,q^2)$, hence  Lemma \ref{ds1}  applies and  $\cS$ turns out to be a $\cH(6,q^2)$.
\end{proof}

\begin{rem}
  The characterization of the non-singular Hermitian variety $\cH(4,q^2)$
  given in \cite{AAFP} is based on the property that a given hypersurface is a blocking set with respect to lines of $\PG(4,q^2)$, see \cite[Lemma 3.1]{AAFP}. This lemma holds when $q>3$.
Since Lemma~\ref{block} extends the same  property to the case $q=3$ it follows that the
result stated in \cite{AAFP} is also valid in $\PG(4,3^2)$.
\end{rem}

\section{Conjecture}

We propose a conjecture for the general $2n$-dimensional case.

\emph{
Let $\cS$ be a hypersurface of $\PG(2d,q^2)$, $q>2$, defined over $\GF(q^2)$, not containing $d$-dimensional projective subspaces. If the degree of $\cS$ is $q+1$ and the number of its rational points is $|\cH(2d,q^2)|$, then every $d$-dimensional subspace of $\PG(2d,q^2)$ meets $\cS$ in at least $\theta_{q^2}(d-1):=(q^{2d-2}-1)/(q^2-1)$  rational points. If there is at least a $d$-dimensional subspace $\Sigma_d$  such that $|\Sigma_d\cap \cS|=|\PG(d-1,q^2)|$, then $\cS$ is a non-singular Hermitian variety of $\PG(2d,q^2)$.
 }

 Lemma \ref{Homma} and Lemma \ref{block}  can be a starting point for the proof of this  conjecture  since from them we get that $\cS$ is a blocking set with respect to lines of $\PG(2d,q^2)$.


\end{document}
